\theoremstyle{plain}
\newtheorem{thm}{Theorem}[section]
\newtheorem{lem}[thm]{Lemma}
\newtheorem{prop}[thm]{Proposition}
\newtheorem{conj}[thm]{Conjecture}
\newcommand{\ZZ}{\mathbb{Z}}
\DeclareMathOperator{\core}{\mathfrak{core}}
\title{Small Cores in 3-uniform Hypergraphs}
\begin{document}

\begin{abstract}
The main result of this paper is that for any $c>0$ and for large enough $n$ if the number of edges in a 3-uniform hypergraph is at least $cn^2$ then there is a core (subgraph with minimum degree at least 2) on at most 15 vertices. We conjecture that our result is not sharp and 15 can be replaced by 9. Such an improvement seems to be out of reach, since it would imply the following case of a long-standing conjecture by Brown, Erd\H os, and S\'os; if there is no set of 9 vertices that span at least 6 edges of a 3-uniform hypergraph then it is sparse. 
\end{abstract}

\author{David Solymosi}
\address{Department of Computer Science, University of Toronto 
10 Kings College Road, Toronto, Ontario M5S 3G4, Canada
}
\email{solymosi@cs.toronto.edu}
\author{Jozsef Solymosi}
\address{Department of Mathematics, University of British Columbia 
1984 Mathematics Road, Vancouver, 
British Columbia V6T 1Z2, Canada}
\email{solymosi@math.ubc.ca}

\thanks{Research was supported by NSERC and the second author partially supported by ERC Advanced Research Grant
AdG. 321104, and by Hungarian National Research Grant NK 104183}

\maketitle

\section{Introduction}
The subject of this paper is finding small cores in 3-uniform hypergraphs in terms of the number of edges.
A core is a non-empty subgraph in which every vertex has degree at least two. We introduce the notation
\[ \core(n,k) = \min \{ t : e(H_n^3) \geq t \Rightarrow \mbox{$H_n^3$ contains a core on at most $k$ vertices} \}, \]
where $H_n^3$ is a $3$-uniform hypergraph on $n$ vertices, and $e(H_n^3)$ is the number of edges
of $H_n^3$. 

Cores are important objects in the theory of hypergraphs. One direction of research is on finding sharp thresholds for the appearance of cores in random hypergraphs, as in \cite{Mike}, \cite{JL}, and in \cite{PSW}. Analyzing cores in hypergraphs turned out to be useful in number theory;  in \cite{CGPT} and \cite{Ab} the authors translate some aspects of Pomerance's problem on prime factorization \cite{Po} to hypergraphs where the existence of cores is the key property. 
More surprisingly cores of hypergraphs appeared in models of protein interaction networks \cite{BH,RTP}. Algorithmic problems of finding minimum cores were considered in \cite{Da}.  Minimum cores in terms of the number of edges might be seen as a generalization of the girth problem in graphs, as the girth of a graph is the size of the smallest subgraph with minimum degree 2. For references about the girth problem and for some variations of the possible notation of girth of hypergraphs we refer to \cite{FS} Section 4, and \cite{EL} Section 4. Since the girth problem, Erd\H os' girth conjecture, is open for graphs one can expect determining $\core(n,k)$ to be hard. Indeed, most of our results are not sharp and possible improvements would imply or require progress in well know conjectures.

Our main result is closely related to the following conjecture:
\begin{conj}[Brown, Erd\H os, and S\'os]\label{BES}
For every $\ell \geq 3$ and $c>0$ there exists an $n_0 = n_0(c)$ such that if $n>n_0$ and $e(H_n^3) \geq cn^2$
then there exists a $F_{\ell+3}^3 \subseteq H_n^3$ such that $e(F_{\ell+3}^3) \geq \ell$.
\end{conj}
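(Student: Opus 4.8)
Conjecture~\ref{BES} is the Brown--Erd\H os--S\'os conjecture, which is open for $\ell\ge 4$; the plan is to carry out the base case $\ell=3$ --- the Ruzsa--Szemer\'edi $(6,3)$-theorem --- by a reduction to the triangle removal lemma, and then to indicate where, and why, the general case resists.

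For $\ell=3$ I would argue by contradiction: suppose $H=H_n^3$ has $e(H)\ge cn^2$ edges but no $6$ vertices spanning $3$ edges, with $n$ large. First note that every pair of vertices of $H$ lies in at most two edges, since three edges through a fixed pair would be three edges on at most five vertices. Now form a graph $G$ on $V_1\cup V_2\cup V_3$, each $V_i$ a copy of $V(H)$: for every edge $\{x,y,z\}$ of $H$ with $x<y<z$, add to $G$ the ``genuine'' triangle with edges $x^{(1)}y^{(2)}$, $y^{(2)}z^{(3)}$, $x^{(1)}z^{(3)}$. Then $G$ is tripartite, it has exactly $m=e(H)$ genuine triangles, and each edge of $G$ lies in at most two of them: an edge of $G$ between $V_1$ and $V_2$ records a pair of vertices of $H$ (the two smallest vertices of each $H$-edge responsible for it), and such a pair lies in at most two edges of $H$, and similarly for the $V_2V_3$ and $V_1V_3$ edges. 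Hence destroying all $m=\Omega(n^2)$ genuine triangles requires $\Omega(n^2)$ edge deletions from $G$, and the triangle removal lemma (applied to the $3n$-vertex graph $G$) then forces $G$ to contain $\Omega(n^3)$ triangles. On the other hand, any triangle $x^{(1)}y^{(2)}z^{(3)}$ of $G$ arises from edges $e_1\supseteq\{x,y\}$, $e_2\supseteq\{y,z\}$, $e_3\supseteq\{x,z\}$ of $H$ with $e_1\cup e_2\cup e_3$ on at most $6$ vertices; since $H$ has no $6$ vertices spanning $3$ edges the $e_i$ cannot be pairwise distinct, and if two of them coincide then that edge equals $\{x,y,z\}$, so the triangle is the genuine triangle of $\{x,y,z\}$. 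Thus $G$ has only $m=O(n^2)$ triangles, contradicting the lower bound $\Omega(n^3)$ once $n$ is large.

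For $\ell\ge 4$ I would want to imitate this scheme, and this is exactly where I expect to be stuck. The $\ell=3$ argument works because the genuine triangles are (essentially) edge-disjoint, which is what upgrades a triangle-removal statement --- itself only about $o(n^3)$ --- to the sharp $o(n^2)$ bound. For $\ell\ge 4$ no auxiliary structure is known to play this role: the relevant configurations ($\ell$ edges on only $\ell+3$ vertices) sit right at the density threshold where the problem turns hard, the extremal constructions are delicate, and the available hypergraph-regularity machinery does not seem to reach $o(n^2)$. So I expect the main obstacle to be conceptual --- finding the right reduction or a suitable removal/counting lemma --- rather than any routine estimate; a complete proof of Conjecture~\ref{BES} would be a substantial advance, and a sensible first target is the smallest open case $\ell=4$, i.e.\ no $7$ vertices spanning $4$ edges.
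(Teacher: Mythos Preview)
Your handling is appropriate: Conjecture~\ref{BES} is stated in the paper as an open conjecture, not as a theorem, and the paper offers no proof of it (it merely cites the $\ell=3$ case as Theorem~\ref{63density}, due to Ruzsa and Szemer\'edi, and the partial result of S\'ark\"ozy--Selkow as Theorem~\ref{SS}). So there is nothing in the paper to compare your proposal against.

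Your argument for the base case $\ell=3$ is the standard Ruzsa--Szemer\'edi reduction to the triangle removal lemma and is correct: the codegree bound of $2$, the tripartite auxiliary graph, the fact that each auxiliary edge lies in at most two genuine triangles, and the verification that every triangle of $G$ is genuine are all sound. Your assessment of $\ell\ge4$ is also accurate---the conjecture is genuinely open there, and the paper itself only manages a $(14,10)$ result (Theorem~\ref{core14}) in place of the conjectured $(13,10)$ case, using hypergraph removal rather than graph removal; this is consistent with your remark that the difficulty is in finding the right auxiliary structure and removal statement.
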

Using the notation of \cite{EL} the smallest $\ell$ such that there is an $F_{\ell+3}^3 \subseteq H_n^3$ is the {\em $(-3)$-girth of $H_n^3.$}
Sometimes we are also going to use the ``$(\ell+3,\ell)$ {\em conjecture"} notation for a given integer $\ell$.

To this date the best bound in this direction is the result of S\'ark\"ozy and Selkow \cite{SaSe}.
\begin{thm}[S\'ark\"ozy and Selkow]\label{SS}
For every $\ell \geq 3$ and $c>0$ there exists an $n_0 = n_0(c)$ such that if $n>n_0$ and $e(H_n^3) \geq cn^2$
then there exists a $F_{\ell+2+\lfloor \log_2{\ell}\rfloor}^3 \subseteq H_n^3$ such that $e(F_{\ell+2+\lfloor \log_2{\ell}\rfloor}^3) \geq \ell$.
\end{thm}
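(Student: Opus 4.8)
The plan is a greedy amplification together with a halving recursion on $\ell$, the recursion being responsible for the $\lfloor\log_2\ell\rfloor$ term. Two preliminary reductions make the main argument local. First, if some vertex $v$ has $e(\mathrm{link}(v))\ge \delta n^2$ for a fixed $\delta=\delta(c,\ell)>0$, then $\mathrm{link}(v)$ is a graph on $n-1$ vertices of unbounded average degree, hence contains a path on $\ell+1$ vertices by Erd\H os--Gallai, and that path together with $v$ already gives $\ell$ edges on $\ell+2\le \ell+2+\lfloor\log_2\ell\rfloor$ vertices; so I may assume $\Delta(H)<\delta n^2$. Secondly I would apply the standard degeneracy cleaning to pass to a subhypergraph on $n'\to\infty$ vertices with minimum degree $\Omega(n')$. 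The single feature I need from these reductions is that deleting any $O_\ell(1)$ vertices destroys fewer than $\tfrac{c}{4}n^2$ edges, so the leftover hypergraph stays dense.

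The engine is a greedy step maintaining a vertex set $A$ of size $O_\ell(1)$ together with the edges it spans. \emph{Cheap move}: if some edge $f$ has $|f\cap A|=2$, replace $A$ by $A\cup f$ --- one new vertex, at least one new edge. \emph{Finisher}: if instead the auxiliary graph $G$ on $V\setminus A$, in which $\{x,y\}$ is an edge exactly when $xyw\in H$ for some $w\in A$, satisfies $e(G)\ge \ell n$, then Erd\H os--Gallai yields a path $u_0u_1\cdots u_s$ in $G$ of any prescribed constant length $s$, and choosing a witness $w_i\in A$ for each $u_{i-1}u_i$ produces the ``loose path'' $\{u_{i-1},u_i,w_i\}_{i=1}^{s}$, i.e.\ $s$ new edges for $s+1$ new vertices. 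Cheap moves alone, starting from one edge, already produce $\ell$ edges on $\ell+2$ vertices, so all the difficulty lies in the \emph{stall}, where neither move applies; a one-line count then shows that a stall forces almost every edge to miss $A$, whence $H[V\setminus A]$ is still dense and one may restart there --- but a naive restart discards the up to $\ell-1$ edges already built, and merging two disjoint sub-configurations is too wasteful to keep the vertex count in line.

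To tame the stall I would organise the whole thing as an induction on $\ell$ with target $g(\ell)=\ell+2+\lfloor\log_2\ell\rfloor$. Assuming it for $\lfloor\ell/2\rfloor$, first locate a set $A$ with $|A|\le g(\lfloor\ell/2\rfloor)$ spanning $\lfloor\ell/2\rfloor$ edges, and then obtain the remaining $\lceil\ell/2\rceil$ edges either by cheap moves (well within budget) or by one finisher loose path of length $\lceil\ell/2\rceil$ (cost $\lceil\ell/2\rceil+1$ new vertices) or, in the genuine stall, by a carefully budgeted merge of $A$ with a sub-configuration sitting inside the still-dense remainder $H[V\setminus A]$, costing only one vertex beyond $\lceil\ell/2\rceil$. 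Since $\lfloor\log_2\lfloor\ell/2\rfloor\rfloor=\lfloor\log_2\ell\rfloor-1$, the vertex total is at most $g(\lfloor\ell/2\rfloor)+\lceil\ell/2\rceil+1=\ell+2+\lfloor\log_2\ell\rfloor$, closing the induction; the base cases $\ell\le 2$ are immediate (a $3$-uniform matching has at most $n/3$ edges, so two edges meet, and a partial Steiner system shows that $5$ vertices can genuinely be needed). The regime to keep in mind throughout is the ``locally linear'' one, where every pair has codegree at most $1$ and no cheap move is ever available: there the statement amounts to finding $\ell$ blocks of a partial Steiner system spanning only $\ell+O(\log\ell)$ points, and it is this step whose known bound carries the extra logarithm.

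I expect the real obstacle to be exactly that stall/merge step: arranging that a restart costs only one extra vertex rather than a whole second copy requires a delicate case analysis of how the already-built configuration can be attached to the dense remainder, and this is precisely where the $\lfloor\log_2\ell\rfloor$ vertices are spent. The remaining ingredients --- the link and degeneracy cleaning, the two applications of Erd\H os--Gallai, and the base cases --- are routine.
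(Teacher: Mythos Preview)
This theorem is not proved in the paper --- it is quoted from S\'ark\"ozy and Selkow, and the only comment the paper makes is that ``their proof relies on Szemer\'edi's regularity lemma.'' So there is no in-paper argument to compare against directly, but that one sentence already signals the problem with your plan: you are proposing a regularity-free proof of a result whose only known proof, and whose $\ell=3$ instance (the Ruzsa--Szemer\'edi $(6,3)$ theorem, equivalent to triangle removal), genuinely requires regularity.

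The concrete gap is exactly the stall/merge step, and it is fatal rather than merely delicate. In the locally linear regime every link is a matching of size at most $n/2$, so with $|A|\le g(\lfloor\ell/2\rfloor)\approx\ell/2$ your auxiliary graph satisfies $e(G)\le |A|\cdot n/2\approx \ell n/4$, well short of your finisher threshold $e(G)\ge \ell n$; and since the minimum-degree constant $c'$ from degeneracy cleaning inherits the smallness of $c$, even the sharper Erd\H os--Gallai threshold $(\lceil\ell/2\rceil-1)n/2$ is out of reach once $c$ is small. Hence in the linear case you are \emph{always} stalled. But the sub-configuration you then locate inside $H[V\setminus A]$ is vertex-disjoint from $A$ by construction, so merging costs its full vertex count $g(\lceil\ell/2\rceil)$, not $\lceil\ell/2\rceil+1$; nothing in your outline attaches it more cheaply. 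Run the scheme at $\ell=3$: you start from a single edge $A$, stall immediately (no cheap moves, finisher threshold unmet), and must now produce two further triples on only three new vertices in the disjoint remainder --- which is impossible for $3$-uniform edges --- or else you are re-deriving the $(6,3)$ theorem by elementary counting, which Behrend-type constructions rule out. Some form of regularity is unavoidable here, and your proposal contains none.
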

Their proof relies on Szemer\'edi's regularity lemma. In our application we are going to improve their result for the $\ell=10$ case using a hypergraph regularity lemma of Frankl and R\"odl \cite{FR}. This result might be of independent interest as this is the first improvement in the last decade in this important problem.

The $\ell=3$ case was proved by Ruzsa and Szemer\'edi \cite{RSz}. We are going to use the following
quantitative version:
\begin{thm}\label{63density}
For every $c>0$ there exists a $\delta>0$ such that if the number of edges is at least $cn^2$
then there exists $\delta n^3$ subgraphs $F_6^3 \subseteq H_n^3$ such that $e(F_6^3) \geq 3$.
\end{thm}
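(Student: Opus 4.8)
The plan is to derive this from the triangle removal lemma, following the classical Ruzsa--Szemer\'edi reduction while keeping track of the \emph{number} of configurations produced. The first step is a dichotomy that separates the ``linear'' part of $H=H_n^3$, where the removal lemma does the work, from the highly ``non-linear'' part, which is handled by elementary counting. Call an edge of $H$ \emph{bad} if it shares two vertices with some other edge, and \emph{good} otherwise. If at least $cn^2/2$ edges are good, they form a \emph{linear} sub-hypergraph $H^*$ (two good edges cannot meet in two vertices) with $e(H^*)\ge cn^2/2$; otherwise at least $cn^2/2$ edges are bad, and then the number of pairs of edges meeting in exactly two vertices, which equals $\sum_{\{u,v\}}\binom{d(u,v)}{2}$ with $d(u,v)$ the codegree, is at least $cn^2/4$.

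I would dispose of the non-linear case first, splitting it once more. If $\sum_{d(u,v)\ge 3}\binom{d(u,v)}{2}\ge cn^2/8$, then since $\binom{d}{3}\ge\frac13\binom{d}{2}$ for $d\ge 3$ there are $\Omega(n^2)$ sets of $5$ vertices spanning at least $3$ edges (a pair of codegree $\ge 3$ together with three of its common neighbours); adjoining an arbitrary sixth vertex turns each into one of $\Omega(n^3)$ copies of $F_6^3$ with at least $3$ edges, each $6$-set arising in only $O(1)$ ways. Otherwise there are at least $cn^2/8$ pairs of codegree exactly $2$; letting $L$ be the graph on $V(H)$ formed by these pairs, we have $e(L)=\Omega(n^2)$, so by convexity $L$ has $\Omega(n^3)$ cherries (paths $u$-$v$-$w$), and for each one the two edges through $\{u,v\}$ together with a suitably chosen edge through $\{v,w\}$ are three distinct edges of $H$ lying on at most $6$ vertices. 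Again this gives $\Omega(n^3)$ copies of $F_6^3$ with at least $3$ edges, with bounded multiplicity.

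The linear case is the heart of the argument. Fix a linear order on $V(H^*)$ and build a tripartite graph $G$ on $V_1\sqcup V_2\sqcup V_3$, each $V_i$ a copy of $V(H^*)$, turning every edge $\{a<b<c\}$ into the triangle on $a\in V_1$, $b\in V_2$, $c\in V_3$. Linearity yields two facts: the $e(H^*)$ ``edge-triangles'' are pairwise edge-disjoint, and every \emph{other} triangle of $G$ picks out three pairwise distinct edges of $H^*$ whose union has at most $6$ vertices, i.e.\ a copy of $F_6^3$ with at least $3$ edges. Since $G$ has $N=3n$ vertices and at least $cn^2/2=\Omega(N^2)$ pairwise edge-disjoint triangles, the counting form of the triangle removal lemma (a graph with $\Omega(N^2)$ edge-disjoint triangles has $\Omega(N^3)$ triangles, which is immediate from the usual statement) shows $G$ has $\Omega(N^3)$ triangles in total; as $H^*$ is linear, $e(H^*)=O(n^2)$, so the edge-triangles are a negligible fraction and $\Omega(n^3)$ triangles remain. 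Each of these produces a copy of $F_6^3$ with at least $3$ edges, and each such $6$-set comes from only $O(1)$ triangles, so we obtain $\delta n^3$ of them. Taking $\delta$ to be the minimum of the constants coming from the three cases completes the proof.

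The genuinely deep input is the triangle removal lemma itself, but it is used as a black box here, so the real work in writing this up is the bookkeeping: checking in the non-linear case that the three edges extracted from a cherry, or from a high-codegree pair, are actually distinct and really do fit on at most $6$ vertices even when the auxiliary vertices collide, and verifying that every $6$-vertex configuration is overcounted by only a bounded factor in each case. I expect the cherry sub-case to be the most delicate point, since one must choose the edge through $\{v,w\}$ so as to avoid an accidental coincidence with the two edges through $\{u,v\}$.
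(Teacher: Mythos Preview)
The paper does not actually prove Theorem~\ref{63density}; it quotes it as a known quantitative form of the Ruzsa--Szemer\'edi $(6,3)$ theorem (citing \cite{RSz}) and uses it as a black box in the proofs of Theorems~\ref{core14} and~\ref{core15}. So there is no ``paper's own proof'' to compare against.

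Your derivation is the standard one and is correct. A few minor remarks on the bookkeeping you flagged. In the cherry sub-case, your worry is unfounded in one direction and justified in the other: if $w\notin\{x,y\}$ then no edge through $\{v,w\}$ can equal $\{u,v,x\}$ or $\{u,v,y\}$, since that would force a third edge through $\{u,v\}$ and $d(u,v)=2$; if $w\in\{x,y\}$, one of the two edges through $\{v,w\}$ may coincide with one of the two through $\{u,v\}$, but the other cannot (linearity/codegree again), so your ``choose the other one'' step always succeeds. In the linear case you should also note that the accidental triangle may sit on only $5$ vertices (e.g.\ $p=r$ is not excluded by the ordering), so padding to $6$ is needed there as well; the multiplicity bound still holds because a $6$-set has only $O(1)$ subsets that could have produced it. With these checks the argument goes through, and the only nontrivial input is, as you say, the triangle removal lemma.
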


An important part of the paper deals with the range when the size of the hypergraph is $o(n^2),$ where the relation
to the Brown, Erd\H os, and S\'os conjecture is clear, however, finding sharp bounds on $\core$ is an
interesting open problem for any edge densities. We collect bounds which we were able to obtain for various ranges. These are summarized in Table \ref{table:mind}.

\section{Very small $k$ (4 to 8)}

\subsection{$k=4$}

The smallest possible core in a 3-uniform hypergraph is the hypergraph $K_4^3-e,$ a clique with one edge removed. (Or equivalently, the unique 3-uniform hypergraph on 4 vertices with 3 edges.)
The presence of such a subgraph belongs to the family of classical Tur\'an-type hypergraph problems. The exact Tur\'an density is not known. (The Tur\'an density $\pi(H)$ of a family $H$ of $k$-graphs is the limit as $n$ tends to infinity of the maximum edge density of an $H$-free $k$-graph on $n$ vertices.) The best lower bound density is due to Frankl and F\"uredi \cite{FF}, it is $\pi(H)\geq 2/7=0.2857\ldots$ while the upper bound, $0.2871$ is due to Baber and Talbot \cite{BT}. 

\medskip
In the problem of finding larger ($k\geq 5$) cores we are interested about the magnitude of the bounds, not the constant multipliers. Therefore, we can suppose w.l.o.g. that our 3-uniform hypergraphs are tripartite.  It makes our analysis simpler a bit. (A random partition would leave an edge in the tripartite graph with probability 2/9.) 

\subsection{$k=5$}

\medskip
A hypergraph on five vertices with at least four edges will contain a core of size 5 or 4. In a tripartite hypergraph 4 vertices span at most two edges, so a core has size at least 5. The upper and a lower bounds on $\core(n,5)$ follow from earlier results. 

\begin{thm}\label{core5}
There are constants $c_1>0$ and $c_2>0$ such that
\[ c_1n^{5/2}\leq \core(n,5)\leq c_2n^{5/2}.\]
\end{thm}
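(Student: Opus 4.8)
The plan is to reduce the problem, in the tripartite setting, to a statement about $C_4$-free bipartite graphs, and then to use the Kővári–Sós–Turán theorem for the upper bound and an explicit algebraic construction for the lower bound. Throughout I use the reduction to tripartite $3$-graphs from the discussion above: for the upper bound one first passes to a tripartition retaining a $2/9$ fraction of the edges, and for the lower bound the construction will itself be tripartite; since subgraphs of tripartite $3$-graphs are tripartite, it suffices to argue in the tripartite world.

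The structural observation I would record first is that a tripartite $3$-graph contains a core on at most $5$ vertices if and only if the link graph $L_v$ of some vertex $v$ contains a $4$-cycle. Four vertices of a tripartite $3$-graph span at most two edges, so there is no core on $\le 4$ vertices; a core on exactly $5$ vertices cannot have part sizes $3+1+1$ (the three vertices of the large part would have degree at most $1$), so its parts have sizes $2+2+1$, and then each of the four triples through the singleton $c$ must be present in order that the two vertices of each doubled part have degree $\ge 2$ — equivalently $L_c$ contains the $4$-cycle on the other four vertices. Conversely a $4$-cycle $b_1a_1b_2a_2$ in $L_c$ yields exactly this core.

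For the upper bound, let $H$ be tripartite with parts of size at most $n$ and $e(H)\ge c_2'n^{5/2}$. Since $\sum_v\deg_H(v)=3e(H)$, some vertex $v$ has $\deg_H(v)\ge 3c_2'n^{3/2}$, so its link $L_v$ is a bipartite graph with at least $3c_2'n^{3/2}$ edges on two classes of size $\le n$; by the Kővári–Sós–Turán bound a $C_4$-free bipartite graph on two classes of size $\le n$ has fewer than $n^{3/2}+n$ edges, so for a suitable absolute constant (and $n$ large) $L_v$ contains a $4$-cycle, hence a $5$-vertex core in $H$. Tracking the loss from the tripartition and absorbing the finitely many small $n$ into the constant gives $\core(n,5)\le c_2n^{5/2}$. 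For the lower bound I would exhibit, for each prime power $q$, a tripartite $3$-graph on $n=3q^2$ vertices with $q^5=3^{-5/2}n^{5/2}$ edges whose every link is $C_4$-free (general $n$ follows by padding with isolated vertices and Bertrand's postulate, losing only a constant). Take $A=B=C=\FF_q^2$, write $a=(a_1,a_2)$ etc., and put $\{a,b,c\}$ in the edge set exactly when
\[ a_2+b_2+c_2 = a_1b_1+b_1c_1+c_1a_1 . \]
This relation is symmetric in $a,b,c$, so all links are isomorphic; for fixed $a$ the link $L_a$ has exactly $q^3$ edges (choose $b_1,b_2,c_1$ freely, then $c_2$ is forced). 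It is $C_4$-free: for a putative $4$-cycle with $b\ne b'$ and $c\ne c'$, subtracting the defining equations of $(b,c),(b,c')$ gives $c_2-c_2'=(a_1+b_1)(c_1-c_1')$ and of $(b',c),(b',c')$ gives $c_2-c_2'=(a_1+b_1')(c_1-c_1')$, whence $(b_1-b_1')(c_1-c_1')=0$; if $c_1=c_1'$ the first identity forces $c_2=c_2'$, and if $b_1=b_1'$ then subtracting the equations of $(b,c),(b',c)$ forces $b_2=b_2'$, either way a contradiction. Hence the $3$-graph has no core on at most $5$ vertices and $\core(n,5)>c_1n^{5/2}$.

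The main obstacle is the lower bound. Every link must be not merely $C_4$-free but essentially extremal $C_4$-free — a partial Steiner system, whose links are matchings, has only $O(n^2)\ll n^{5/2}$ edges — and these near-extremal links must be produced simultaneously and consistently over all vertices; the symmetric bilinear relation above is exactly what does this in one stroke, and verifying its $C_4$-freeness is the only genuinely computational point, the tripartite reduction and the Kővári–Sós–Turán input being routine.
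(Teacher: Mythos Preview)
Your proof is correct, and the upper bound is essentially the paper's argument repackaged: both locate a copy of $K^3_{(2,2,1)}$, the paper by directly double-counting cherries via $\sum_{i<j}\binom{\deg(v_i,v_j)}{2}$, you by passing to a link graph and invoking K\H{o}v\'ari--S\'os--Tur\'an, which is the same count in disguise. Your structural lemma characterising $5$-vertex tripartite cores as $C_4$'s in a link is a clean way to say it, but it is not a genuinely different idea.

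The lower bound, however, does take a different route. The paper simply cites Mubayi's theorem on $K^3_{(1,2,t+1)}$-free $3$-graphs (which in turn rests on a construction of F\"uredi), whereas you give a self-contained explicit construction: the symmetric relation $a_2+b_2+c_2=a_1b_1+b_1c_1+c_1a_1$ over $\FF_q^2$, with the $C_4$-freeness of every link verified by a short computation. What this buys you is that the argument is entirely elementary and closed under inspection --- no black-box citation is needed --- and the symmetry of the defining form makes the ``all links simultaneously extremal'' requirement transparent. What the paper's route buys is brevity and a pointer to the broader context (Mubayi's result handles all $K^3_{(1,2,t+1)}$ at once). Your construction is, incidentally, in the same spirit as the algebraic constructions underlying Mubayi's bound, so the two are cousins rather than strangers.
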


\begin{figure}[ht]
        \centering
        \begin{subfigure}[b]{0.2\textwidth}
                \includegraphics[width=\textwidth]{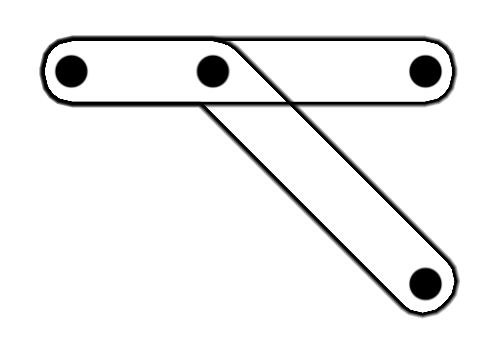}
                \caption{(a)}
                \label{fig:nonlin}
        \end{subfigure}%
        ~ 
        \begin{subfigure}[b]{0.2\textwidth}
                \includegraphics[width=\textwidth]{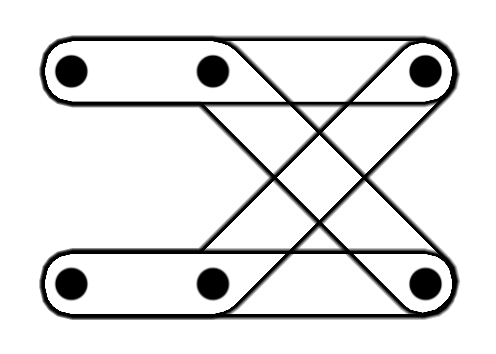}
                \caption{(b)}
                \label{fig:6core}
        \end{subfigure}
        ~ 
        \begin{subfigure}[b]{0.2\textwidth}
                \includegraphics[width=\textwidth]{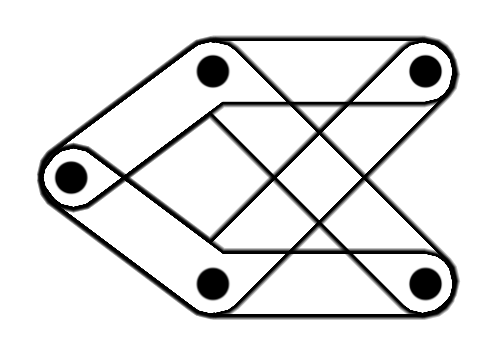}
                \caption{(c)}
                \label{fig:5core}
        \end{subfigure}
        \caption{}
\end{figure}

When $e(H_n^3) > n^{5/2}$, we show that a core of size at most $5$ must exist. This follows from the existence of a $K^3_{(2,2,1)}$, the complete tripartite graph on 5 vertices, which is a core. The extremal problem for complete tripartite graphs  was considered by Erd\H os in \cite{Er}. For the sake of completeness we present the simple calculation showing the upper bound here as we will use similar arguments later. 
We count the number of edge pairs that intersect in two points (Figure~\ref{fig:nonlin}), which is
\[ \sum_{\substack{v_i \neq v_j \in H_n^3\\i<j}} \binom{\deg(v_i,v_j)}{2}, \]
where $\deg(v_i, v_j)$ is the number of edges that contain both $v_i$ and $v_j$. The number of edges, $e,$ is large enough so that
\[ \sum_{\substack{v_i \neq v_j \in H_n^3\\i<j}} \binom{\deg(v_i,v_j)}{2} \geq \binom{n}{2} \binom{e/\binom{n}{2}}{2} \approx \frac{e^2}{n^2} > {n\choose 3}. \]
The inequality shows that at least two
intersecting edge pairs intersect in three points, as seen in Figure~\ref{fig:5core}.
This subgraph containing four edges
on five points is a core, so $\core(n,5) \leq n^{5/2}$.

\medskip
The more interesting lower bound follows from a nice result of Mubayi who proved in that the maximum number of edges in an $r$-uniform graph on $n$ vertices without a complete $r$-graph $K_{(1,\ldots, 1, 2, t+1)}$ is $\frac{\sqrt{t}}{r!}n^{r-1/2}.$ (For details see Theorem~3.1 in \cite{Mu}.) Mubayi's construction builds on some ideas from F\"uredi's paper \cite{Fu2}.

\medskip

\subsection{$k=6, 7, 8$}
\medskip
The next three cases have the same, quadratic upper and lower bounds.

\begin{thm}\label{core678}
There are constants $c_1>0$ and $c_2>0$ such that
\[ c_1n^2\leq \core(n,8)\leq \core(n,7)\leq \core(n,6)\leq c_2n^{2}.\]
\end{thm}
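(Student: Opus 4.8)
The plan is to treat the three inequalities separately. The chain $\core(n,8)\le\core(n,7)\le\core(n,6)$ is immediate from the definition: if $t=\core(n,6)$ then $e(H_n^3)\ge t$ forces a core on at most $6$ vertices, which is a fortiori a core on at most $7$ (resp.\ $8$) vertices, so $t$ also witnesses the defining condition for $k=7$ and $k=8$. Hence the real content is the upper bound $\core(n,6)\le c_2 n^2$ and the lower bound $\core(n,8)\ge c_1 n^2$.

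For the upper bound I would argue as follows. By passing to a random tripartition (losing a factor $\tfrac29$ in the edge count, as noted above) it suffices to show that a tripartite $3$-graph $H$ with parts $A,B,C$ of size $\approx n/3$ and at least $c_2' n^2$ edges, for a suitable absolute constant $c_2'$, contains a core on at most $6$ vertices. For $a\in A,\ b\in B$ set $N_C(a,b)=\{c\in C:\{a,b,c\}\in H\}$ and $d(a,b)=|N_C(a,b)|$, so $\sum_{a,b}d(a,b)=e(H)$. Counting incidences against pairs of $C$-vertices gives
\[
\sum_{a\in A,\ b\in B}\binom{d(a,b)}{2}=\sum_{\{c,c'\}\subseteq C}\bigl|\{(a,b):c,c'\in N_C(a,b)\}\bigr| .
\]
By convexity the left-hand side is at least $|A||B|\binom{e(H)/(|A||B|)}{2}\gtrsim e(H)^2/n^2$, which exceeds $\binom{|C|}{2}$ once $e(H)\ge c_2' n^2$ with $c_2'$ large enough. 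Pigeonhole then yields a pair $\{c,c'\}$ together with two distinct pairs $(a,b)\neq(a',b')$ such that $\{a,b,c\},\{a,b,c'\},\{a',b',c\},\{a',b',c'\}\in H$. These four edges span at most $6$ vertices, and one checks directly that they form a core: if $\{a,b\}$ and $\{a',b'\}$ are vertex-disjoint this is a $2$-regular $3$-graph on $6$ vertices, and if they share a vertex it is a copy of $K^3_{(1,2,2)}$ on $5$ vertices. Either way $H$ has a core on at most $6$ vertices, so $\core(n,6)\le c_2 n^2$, and the whole chain of upper bounds follows.

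For the lower bound I would use a probabilistic deletion argument, because every natural explicit dense construction one writes down already contains a $5$- or $6$-vertex core. Fix a small constant $\delta>0$ and let $H$ be the random $3$-graph on $[n]$ in which each triple is included independently with probability $p=\delta/n$; then $e(H)=(1+o(1))\tfrac{\delta}{6}n^2$ with high probability. Call a set $S$ with $4\le|S|\le 8$ \emph{heavy} if it spans at least $\lceil 2|S|/3\rceil$ edges; note that the vertex set of any core on $k\le 8$ vertices is heavy, since such a core has at least $\lceil 2k/3\rceil$ edges. The expected number of heavy sets is at most $\sum_{k=4}^{8}n^{k}(Cp)^{\lceil 2k/3\rceil}=O(\delta^{4}n^{2})$ for an absolute constant $C$, the dominant terms coming from $k\in\{6,7,8\}$, where $k-\lceil 2k/3\rceil=2$. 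Using concentration for $e(H)$ and Markov for the number of heavy sets, fix an outcome with $e(H)\gtrsim\delta n^{2}$ and only $O(\delta^{4}n^{2})$ heavy sets; deleting every edge contained in some heavy set removes at most $\binom{8}{3}\cdot O(\delta^{4}n^{2})=O(\delta^{4}n^{2})\ll\delta n^{2}$ edges (for $\delta$ small) and destroys every heavy set. The resulting $3$-graph has $\Omega(n^{2})$ edges and no heavy set, hence no core on at most $8$ vertices, so $\core(n,8)\ge c_1 n^2$.

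The two places that need care are: for the upper bound, recognising the right auxiliary structure (pairs from $A\times B$ versus $C$), so that the relevant count is compared against $\binom{|C|}{2}=O(n^{2})$ rather than against $\binom{n}{3}$ as in the $\core(n,5)$ computation — once this is set up the rest is routine, the only mild annoyances being the tripartite reduction and the convexity step when codegrees are small. For the lower bound the genuine obstacle is the absence of a clean extremal example, which forces the deletion method; there the crux is the first-moment estimate over all $O(n^{8})$ candidate small configurations, together with tracking constants carefully enough that the deleted edges stay below the $\Omega(n^{2})$ one starts with. An algebraic alternative, such as $V=\ZZ_n$ with $n$ prime and the zero-sum triples as edges (which is linear and can be shown to avoid, e.g., the octahedral $6$-core), looks promising, but verifying the absence of \emph{all} cores on up to $8$ vertices is case-heavy, so the probabilistic route seems cleaner.
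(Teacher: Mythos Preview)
Your proposal is correct. The upper bound $\core(n,6)=O(n^{2})$ is essentially the paper's argument: both count ``cherries'' (pairs of edges sharing two vertices) via $\sum\binom{\deg(v_i,v_j)}{2}$, compare against $\binom{n}{2}$ (in your tripartite version, $\binom{|C|}{2}$), and pigeonhole on the pair of degree-one endpoints to obtain four edges on at most six vertices.

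For the lower bound you take a genuinely different route. The paper gives an explicit algebraic example: the tripartite $3$-graph on $A\cup B\cup C$ with $A=B=C=\ZZ/p\ZZ$ and edge set $\{\{a,b,c\}:a+b=c\}$ --- essentially the ``zero-sum'' construction you mention and dismiss in your final paragraph (up to replacing $c$ by $-c$). The case analysis you feared is in fact short: since any two vertices determine at most one edge, a core on $\le 8$ vertices must have part sizes $(2,2,2)$ or a permutation of $(3,3,2)$, and in each case a couple of linear equations over $\ZZ/p\ZZ$ (for $p>3$) force two vertices to coincide. Your probabilistic deletion argument is also valid --- the first-moment count over heavy $k$-sets with $4\le k\le 8$ really is $O(\delta^{4}n^{2})$, dominated by $k\in\{6,7,8\}$, and deleting all edges inside heavy sets kills every small core at once --- and it has the virtue of being completely mechanical once set up. What the explicit construction buys is a concrete constant ($c_{1}=1/9$ on $n=3p$ vertices rather than an unspecified small $\delta$) and a structured example that reappears naturally when one asks whether $\core(n,9)=o(n^{2})$.
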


The upper bound follows from a calculation similar to the $k=5$ case above. 
\begin{lem}
\[ \core(n,6) < n^{2}.\]
\end{lem}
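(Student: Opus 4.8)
The plan is to run the same edge-pair count as in the $k=5$ case but to stop one step earlier and extract a slightly larger core, which is why a much smaller number of edges suffices. For a pair of distinct vertices $\{u,v\}$ let $\deg(u,v)$ be the number of edges containing both, and set
\[ N \colonequals \sum_{\{u,v\}} \binom{\deg(u,v)}{2}. \]
This counts ``cherries'': pairs $\bigl(\{u,v\},\{a,b\}\bigr)$ with $a\neq b$ such that $\{u,v,a\}$ and $\{u,v,b\}$ are both edges. Counting the same cherries by their ``outer'' pair $\{a,b\}$ instead gives $N=\sum_{\{a,b\}} m(a,b)$, where $m(a,b)$ is the number of pairs $\{u,v\}$ with $\{u,v,a\},\{u,v,b\}\in E(H_n^3)$.

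The first step is the lower bound $N\geq \binom n2\binom{3e/\binom n2}{2}$, which is just convexity of $x\mapsto\binom x2$ together with $\sum_{\{u,v\}}\deg(u,v)=3e$ — the same inequality used above for $k=5$. When $e\geq n^2$ we have $3e/\binom n2>6$, so the right-hand side exceeds $15\binom n2$, which is far more than $\binom n2$, the number of outer pairs; hence by pigeonhole some $\{a,b\}$ has $m(a,b)\geq 2$. Fix such an $\{a,b\}$ together with two distinct pairs $\{u,v\}\neq\{u',v'\}$ with $\{u,v,a\},\{u,v,b\},\{u',v',a\},\{u',v',b\}$ all edges. The second step is to observe that these four edges form a core on at most $6$ vertices: if $\{u,v\}$ and $\{u',v'\}$ are disjoint we get the $6$ vertices $u,v,u',v',a,b$, each lying in exactly two of the four (pairwise distinct) edges; if the two pairs share a vertex $w$ we get only $5$ vertices, with $w$ in all four edges and the other four each in exactly two — this latter configuration is precisely the $K^3_{(2,2,1)}$ of the $k=5$ discussion. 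In every case every vertex has degree at least $2$, so $\core(n,6)<n^2$.

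I expect the only thing requiring a little care is the bookkeeping in the second step: checking that $a,b\notin\{u,v\}\cup\{u',v'\}$ and that the four edges are genuinely distinct — both immediate from each triple being an actual edge (hence a $3$-element set) and from $\{u,v\}\neq\{u',v'\}$ and $a\neq b$ — so that the vertex count really is $5$ or $6$ and not larger. There is no substantial obstacle; note also the large slack, since $3e/\binom n2>2$, i.e.\ roughly $e>n^2/3$, already makes the pigeonhole step work, so no attention to constants or lower-order terms is needed.
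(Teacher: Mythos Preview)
Your proof is correct and follows essentially the same approach as the paper's: count cherries via $\sum\binom{\deg(u,v)}{2}$, use convexity to get more than $\binom n2$ of them, and pigeonhole on the outer pair $\{a,b\}$ to force two cherries to share it, yielding a four-edge core on at most six vertices. You are simply more explicit than the paper about the bookkeeping (the factor $3$ in $\sum\deg(u,v)=3e$, the distinctness of the four edges, and the $5$- versus $6$-vertex cases), but the argument is the same.
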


\begin{proof}
If the number of edges, $e,$ is large enough so that
\[ \sum_{\substack{v_i \neq v_j \in H_n^3\\i<j}} \binom{\deg(v_i,v_j)}{2} \geq \binom{n}{2} \binom{e/\binom{n}{2}}{2} \approx \frac{e^2}{n^2} > {n\choose 2}, \]
then at least two
intersecting edge pairs intersect in the two degree one vertices, as seen
in Figure~\ref{fig:6core}. This subgraph containing four edges
on at most 6 vertices is a core, so $\core(n,6)< n^{2}$.
\end{proof}

To prove the lower bound in Theorem \ref{core678} we give an algebraic construction for $H_{3n}^3$ with $e(H_{3n}^3) = n^2$ which does not contain a core of size 8 or less.

\begin{lem}
There is a constant $c>0$  such that
\[ \core(n,8)\geq cn^{2}.\]
\end{lem}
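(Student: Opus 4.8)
\textbf{The plan} is to exhibit an explicit algebraic tripartite $3$-graph on $3q$ vertices with exactly $q^2$ edges and no core on $8$ or fewer vertices, where $q$ is a prime of size $\Theta(n)$; padding with isolated vertices and invoking Bertrand's postulate then gives $\core(n,8) > q^2 \ge cn^2$ for an absolute constant $c$ (one can take $c = 1/36$). Concretely, I would take the three parts to be $A = B = C = \ZZ_q$ with $q$ a prime exceeding $8$, and let the edges be all triples $\{a,b,c\}$ with $a \in A$, $b \in B$, $c \in C$ and $a+b+c = 0$ in $\ZZ_q$. Choosing $a$ and $b$ freely forces $c$, so there are exactly $q^2$ edges, i.e.\ order $n^2$ many.

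The heart of the argument is a structural analysis of a putative small core. Two elementary facts drive everything: (i) the hypergraph is linear, since two edges sharing two vertices would share the third as well ($c = -a-b$ is determined by $a,b$, and similarly for the other coordinates); and (ii) consequently, for a vertex $b$ in the $B$-part, the edges through $b$ are determined by their $A$-vertex, so inside any vertex subset $S = S_A \cup S_B \cup S_C$ one has $\deg_S(b) \le |S_A|$, and symmetrically. Now suppose $S$ spans a core with $|S| \le 8$. Since the core is nonempty it contains an edge, so all three parts are nonempty. If some part, say $S_A$, had size $1$, then by (ii) every vertex of $S_B$ and $S_C$ would have degree at most $1$ in the core, a contradiction. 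Hence all three parts have size at least $2$; as their sizes sum to at most $8$, the smallest part — call it $S_A = \{a_1,a_2\}$ — has size exactly $2$.

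In this remaining case, for each $b \in S_B$ the bound $2 \le \deg_S(b) \le |S_A| = 2$ forces both edges $\{a_1,b,-a_1-b\}$ and $\{a_2,b,-a_2-b\}$ to lie in the core, so $-a_1-b$ and $-a_2-b$ both belong to $S_C$; symmetrically $-a_1-c, -a_2-c \in S_B$ for every $c \in S_C$. Thus $-a_1 - S_B \subseteq S_C$ and $-a_1 - S_C \subseteq S_B$, which together yield $S_C = -a_1 - S_B$; running the same computation with $a_2$ gives $S_C = -a_2 - S_B$. Hence $S_B$ is invariant under translation by $a_1 - a_2 \ne 0$, and since $q$ is prime this forces the nonempty set $S_B$ to be all of $\ZZ_q$ — impossible, as $|S_B| \le 8 < q$. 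This contradiction completes the argument; to obtain the bound for every large $n$ one takes a prime $q \in (n/6,\, n/3]$, runs the construction on its $3q$ vertices, and adjoins $n - 3q$ isolated vertices.

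I expect the only real obstacle to be the $|S_A| = 2$ case, and within it the step that upgrades the two local membership conditions to the global statement "$S_B$ is a nonempty translation-invariant subset of $\ZZ_q$"; primality of $q$ is exactly what makes that fatal. It is worth emphasizing that the bound $8$ is used essentially: with $9$ vertices the configuration $|S_A| = |S_B| = |S_C| = 3$ is not excluded by this reasoning, in agreement with the paper's remark that an improvement from $8$ to $9$ (or from $15$ to $9$ in the main theorem) would entail progress on the Brown--Erd\H os--S\'os conjecture.
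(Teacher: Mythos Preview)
Your proof is correct and uses the same algebraic construction as the paper (edges $\{a,b,c\}$ with $a+b+c=0$ over $\ZZ_q$, equivalent to the paper's $a+b=c$ after negating the third coordinate). The analyses of a putative small core differ, however. The paper first uses the two-sided edge count $2\max(|U|,|V|,|W|) \le e \le \min(|U||V|,|U||W|,|V||W|)$ to cut the possible size profiles down to $(2,2,2)$ and $(2,3,3)$, and then dispatches each case by ad hoc linear manipulations over $\ZZ_p$, deriving $2(v_1-v_2)=0$ in the first case and $3(v_1-v_2)=0$ in the second, so that $p\neq 2,3$ suffices. Your argument instead observes uniformly that the smallest part has size exactly $2$, and converts the degree-$2$ constraint into the statement that $S_B$ is a nonempty subset of $\ZZ_q$ invariant under the nonzero translation $a_1-a_2$; primality then forces $S_B=\ZZ_q$. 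This handles all size profiles at once and makes the role of primality transparent; the paper's version is a bit more hands-on but (once one tracks the sizes) also only needs $p>3$ rather than the $q>8$ you quote. Either way the construction and the conclusion are the same.
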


\begin{proof}
Let $A, B, C \cong \ZZ / p\ZZ$ for some large prime $p$, and consider
the 3-partite 3-uniform hypergraph over vertices $A\cup B\cup C$
which has edges
$$\left\lbrace \{ a, b, c\} \big| a\in A, b\in B, c\in C, a + b = c \right\rbrace.$$

Suppose $U \subseteq A, V \subseteq B, W \subseteq C$ are nonempty
subsets such that the induced subgraph on $U\cup V\cup W$ is a core.
We will show that $|U|+|V|+|W| \geq 9$.

Note that we require at least $2\cdot\max(|U|,|V|,|W|)$ edges for
every vertex to have degree two, but we can only have at most
$\min(|U||V|, |U||W|, |V||W|)$ edges, as any two vertices uniquely
determine an edge. For $|U|+|V|+|W| < 9$ this leaves us with two
cases, when they all have size 2, or when two of them have
size 3 and one has size 2.

Let $U = \{u_1, \ldots u_{|U|}\}$, $V = \{v_1, \ldots v_{|V|}\}$,
and $W = \{w_1, \ldots w_{|W|}\}$.

If $|U|=|V|=|W|=2$, we must have $4$ edges, so every $\{ u_i, v_j \}$
will be contained in an edge.
The expressions $u_1 + v_1, u_1+v_2, u_2+v_1, u_2+v_2$
give two distinct values, so $u_1+v_1 = u_2 + v_2$ and $u_1+v_2 = u_2 + v_1$.
Combining these expressions, $u_2 + v_1 - v_2 + v_1 = u_2 + v_2$, so
$(v_1 - v_2) + (v_1 - v_2) = 0$, therefore $v_1 = v_2$ if $2 \not| p$.

If $|U|=|W|=3$ and $|V|=2$, we must have $6$ edges, so every $\{ u_i, v_j \}$
will be contained in an edge.
The expressions
$u_1 + v_1, u_1 + v_2, u_2 + v_1, u_2 + v_2, u_3 + v_1, u_3 + v_2$
only give three distinct values. No three of these expressions can
be equal, since then $u_1$, $u_2$, and $u_3$ are not all distinct.
We may assume
\begin{equation}\label{c8-1}
u_1 + v_1 = u_2 + v_2,
\end{equation}
then $u_1+v_2 \neq u_2 + v_1$ by the $|U|=|V|=|W|=2$ case above.\\
The other two distinct values must correspond to
\begin{equation}\label{c8-2}
u_1+v_2 = u_3 + v_1 \mbox{, and}
\end{equation}
\begin{equation}\label{c8-3}
u_2+v_1=u_3+v_2.
\end{equation}
Now consider \eqref{c8-1}-\eqref{c8-2}+\eqref{c8-1}, that is,
\begin{align*}
v_1 - v_2 + u_2 + v_1 &= u_2 + v_2 - u_3 - v_1 + u_3 + v_2,\\
3 v_1 &= 3 v_2,
\end{align*}
so $v_1 = v_2$ if $3 \not| p$.

The case $|V|=|W|=3$ and $|U|=2$ is the same as above, and if
$|U|=|V|=3$ and $|W|=2$ then we can repeat the argument with
expressions of the form $u_i + (-w_i)$.
\end{proof}

\section{Small $k$ (9 to 15)}

Our main result in this section is to prove that if $e(H_n^3) = \Omega(n^2)$, then $H_n^3$ contains a core of size
at most 15. We conjecture that our result is not sharp and 15 can be replaced by 9. As before, we will suppose w.l.o.g.~that $H_n^3$ is tripartite, with disjoint vertex sets $V_1, V_2,$ and $V_3$ so that every edge has one vertex in each class. 

\subsection{$k=9$}

\medskip
We conjecture that that if $e(H_n^3) = \Omega(n^2),$ then $H_n^3$ contains a core of size at most 9. Such improvement seems to be out of reach, since it would imply the $\ell=6$ case of the Brown, Erd\H os, S\'os conjecture (Conjecture \ref{BES}). In a tripartite core on 9 vertices at least 3 vertices are in the same vertex class, so the number of edges is at least 6 in the core since every vertex has degree at least two. This would imply the $(9,6)$ conjecture above.

\subsection{$k=10$}

\begin{prop}\label{core10}
There is a $c>0$ such that there are graphs $H_n^3$ without a core of size 10 with $e({H_n^3})=cn^2$ for arbitrary large $n.$
\end{prop}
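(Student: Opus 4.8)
The plan is to mimic the algebraic construction used for the $\core(n,8)\geq cn^2$ lower bound, but with a richer ambient group so that small cores are obstructed up through $10$ vertices. Concretely, I would take $A,B,C$ to be copies of an abelian group $G$ of order $p$ (a large prime, or a suitable product of small primes we wish to exclude) and consider the tripartite hypergraph with edge set $\{\{a,b,c\}: a+b=c\}$, and then check case-by-case that no subset $U\cup V\cup W$ with $|U|+|V|+|W|\le 10$ induces a core. The arithmetic point, exactly as in the $k\le 8$ argument, is that any two vertices in distinct classes determine at most one edge, so a core on $U\cup V\cup W$ needs at least $2\max(|U|,|V|,|W|)$ edges and can have at most $\min(|U||V|,|U||W|,|V||W|)$; this already forces the part sizes to be very balanced. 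For a total of at most $10$ the only genuinely new case beyond what is handled for $k\le 8$ is, up to permuting the classes, $|U|=4,|V|=3,|W|=3$ (the all-threes case $3+3+3=9$ and $4+3+3=10$), so essentially one additional configuration must be ruled out.

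First I would restate the counting bound to eliminate all unbalanced partitions of $\{7,8,9,10\}$ into three positive parts, leaving only $(3,3,3)$ and $(4,3,3)$ as live cases (and noting $(3,3,3)$ already settles $k=9$ and hence is the content of the remark preceding the proposition). Next, for the surviving case I would argue as in the existing proof: if the induced graph is a core then every pair $\{u_i,v_j\}$ with $u_i\in U$, $v_j\in V$ must lie in an edge (since $|U|\cdot|V|$ is the only way to reach $2|U|$ edges when $|V|$ is the smaller nonmaximal part — here one has to be slightly careful because with $|U|=4$ we need $8$ edges and $|U||V|=12$, $|U||W|=12$, $|V||W|=9$, so the binding constraint is $|V||W|=9\ge 8$, meaning one of the $9$ possible $V$–$W$ pairs may be absent). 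I would then translate "which pairs are present" into equalities among the sums $u_i+v_j$, exactly as equations \eqref{c8-1}--\eqref{c8-3} do, set up the resulting system over $G$, and take an appropriate integer combination to deduce a relation of the form $m(v_1-v_2)=0$ or $m(u_1-u_2)=0$ for a small explicit $m$, contradicting distinctness provided $\gcd(m,p)=1$. Choosing $p$ coprime to all the small integers that appear (which, extrapolating from the $2$ and $3$ that show up for $k\le 8$, will be something like the primes up to $5$ or $7$) then finishes the construction, and taking $n=3p\to\infty$ with $e=p^2=(n/3)^2$ gives $e=cn^2$.

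The main obstacle is the bookkeeping in the $(4,3,3)$ case: unlike the earlier cases, the "every cross pair is an edge" reduction does not apply cleanly to all three pairs of classes simultaneously, so I expect to split into sub-cases according to which $V$–$W$ pair (if any) is missing, and in each sub-case identify the right linear combination of the sum-equalities that collapses to a bounded multiple of a single difference. I anticipate that a symmetry argument (permuting $U$ or relabelling within a class, and using the $u_i+(-w_i)$ trick already invoked at the end of the $k\le 8$ proof to swap the roles of $W$ and the "difference" variable) will cut the number of sub-cases down substantially. A secondary point to get right is the exact list of forbidden prime divisors of $p$; I would determine it by carrying the linear algebra through symbolically and reading off the coefficients, rather than guessing, and then simply state "for $p$ larger than this constant and coprime to it" at the start.
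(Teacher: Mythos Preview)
Your approach is genuinely different from the paper's, and in its current form it does not go through.

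The paper does not use the algebraic $a+b=c$ construction at all for this proposition. It simply quotes the probabilistic lower-bound construction of Brown, Erd\H os and S\'os: for every $\ell$ there exist $3$-uniform hypergraphs with $\Theta(n^2)$ edges and no $\ell+2$ vertices spanning $\ell$ edges. Taking $\ell=8$ and passing to a tripartite subhypergraph, any core on exactly $10$ vertices would have at least $4$ vertices in one class and hence at least $8$ edges, i.e.\ it would be a $(10,8)$ configuration, which the construction forbids. That is the whole proof.

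Your plan has two concrete problems. First, you set out to exclude cores of size \emph{at most} $10$, but the $a+b=c$ hypergraph over $\ZZ/p\ZZ$ contains cores on $9$ vertices: take $U=V=\{0,1,2\}$ and $W=\{1,2,3\}$, then all seven induced edges give every vertex degree at least two. So the $(3,3,3)$ case is not ``the content of the remark preceding the proposition''; it is simply false for this construction, and the proposition is only about cores of size exactly $10$. Second, even restricting to exactly $10$ vertices, your list of live partitions is incomplete: $(4,4,2)$ satisfies the counting constraint ($2\cdot 4=8$ edges needed, $\min(16,8,8)=8$ available) and must also be treated.

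If you fix both points, the algebraic route can be pushed through. For $(4,4,2)$ one shows the size-$4$ class would have to be invariant under a nonzero translation, impossible in $\ZZ/p\ZZ$. For $(4,3,3)$ the cleanest argument is additive: with $8$ or $9$ edges the multiset of differences $W-V$ (minus at most one element) must have exactly four values each of multiplicity two, which forces $|W-V|\le 5$; Cauchy--Davenport and its equality case then give a contradiction. This yields an explicit alternative to the probabilistic construction, but it is considerably more work than the one-line citation the paper uses.
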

If a core has 10 vertices then at least 4 of the vertices are in the same vertex class so the core has at least 8 edges since every vertex has degree at least two. So, we had 8 edges on 10 vertices. An elegant probabilistic construction shows that such configurations can be avoided in dense linear 3-uniform hypergraphs;  Brown, Erd\H os, and S\'os showed in  \cite{BES} that their conjecture (if true) is sharp. For every $\ell$ there is a $c>0$ such that there are arbitrary large 3-uniform hypergraphs on $n$ vertices having at least $cn^2$ edges, without $\ell+2$ vertices spanning at least $\ell$ edges.

\subsection{$k=11$}

\medskip
A core on 11 vertices would also contain at least 8 edges since at least four vertices will be in one of the vertex classes and in a core every vertex has degree at least two. Proving that $\core(n,11)=o(n^2)$ would again imply the Brown, Erd\H os, S\'os conjecture for the $\ell=8$ case. (The $(11,8)$ conjecture.)

\subsection{$k=12$}

\medskip
In the range of core $k\geq 12$ and  edge density $O(n^{3/2})\leq e(H_n^3)$ we can suppose w.l.o.g. that
$H_n^3$ is a {\em linear} tripartite hypergraph, i.e. no edges intersect in two vertices. More precisely we show that 

\begin{prop}\label{linear}
For every $\varepsilon >0$ there is a threshold $n_0$ such that if the number of edge-pairs intersecting in 2 vertices is at least $(1/2+\varepsilon)n^{3/2}$ in $H_n^3,$ $n\geq n_0,$ then it contains  core of size at most 12.
\end{prop}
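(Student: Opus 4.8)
The plan is to build a small core out of a configuration of edge-pairs that meet in two vertices. Call such a pair a \emph{cherry}; the two vertices in the intersection will be called the \emph{base} of the cherry. A cherry on base $\{x,y\}$ consists of $x,y$ together with two further vertices, and contributes two edges; note that if we can find two cherries sharing the \emph{same} base then the four vertices involved, plus the two base vertices, span four edges on at most six vertices and every vertex has degree $\geq 2$ — that would already give a core of size at most $6$. So the hard case is when every base supports at most one cherry, i.e.\ when we are given a linear-looking family where each pair of vertices lies in at most two edges and the pairs with degree exactly two are ``spread out.'' First I would reduce to this case by a cleanup step: delete one edge from every base that currently has degree $\geq 3$; since there are at least $(1/2+\varepsilon)n^{3/2}$ cherries, either many bases have high degree (and we find the size-$6$ core directly, after a short counting argument) or after cleanup we still have roughly $(1/2+\varepsilon')n^{3/2}$ cherries with distinct bases.

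Next I would pass to the auxiliary graph $G$ on vertex set $V(H_n^3)$ whose edges are exactly the \emph{bases} of the surviving cherries. Then $G$ has $\geq (1/2+\varepsilon')n^{3/2}$ edges, so by the Kővári–Sós–Turán bound (or just $\mathrm{ex}(n,C_4) \le \tfrac12(1+\sqrt{4n-3})\,n/2 \sim \tfrac12 n^{3/2}$) the graph $G$ contains a $4$-cycle $b_1 b_2 b_3 b_4$, provided $\varepsilon'>0$ and $n$ is large; this is exactly where the constant $1/2$ in the hypothesis is used. Each edge $b_i b_{i+1}$ of this $C_4$ is the base of a cherry, which means there exist vertices $p_i$ with $\{b_i,b_{i+1},p_i\}$ an edge of $H_n^3$ for $i=1,2,3,4$ (indices mod $4$). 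Now examine the subgraph $F$ induced on $\{b_1,b_2,b_3,b_4\} \cup \{p_1,p_2,p_3,p_4\}$: it has at most $8$ vertices and it contains the $8$ edges $\{b_i,b_{i+1},p_i\}$ together with the second edge of each cherry. In this collection each $b_i$ already has degree $\geq 2$ (it lies in the cherries on bases $b_{i-1}b_i$ and $b_i b_{i+1}$), so the only vertices that might have degree $1$ are some of the $p_i$.

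To finish I would take the \emph{core closure}: repeatedly delete any vertex of degree $\le 1$ from $F$ (together with its incident edge). Because the four base vertices $b_1,\dots,b_4$ each sit in two edges that \emph{both} survive any such deletion unless one of those edges is itself killed, one checks that the $b_i$ are never removed, so the process terminates at a nonempty subgraph on at most $8$ vertices in which every vertex has degree $\geq 2$ — a core. The only way the count of vertices can creep above $8$ up to the stated bound of $12$ is if the cherry second-edges introduce a few extra vertices before the cleanup step, or if one prefers to be generous and not track exactly which $p_i$ coincide; in any case the honest bound coming out of a $C_4$ plus its four attached pendant edges and their completions is comfortably at most $12$, and one could plausibly sharpen it.

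The step I expect to be the main obstacle is making the cleanup/dichotomy in the first paragraph quantitatively tight enough that, after removing high-degree bases, the surviving cherry count is still strictly above $\tfrac12 n^{3/2}(1+o(1))$ so that the $C_4$-free extremal bound is genuinely beaten; this is precisely why the hypothesis is phrased with the threshold $(1/2+\varepsilon)n^{3/2}$ rather than merely $\omega(n^{3/2})$, and it is the reason the constant, not just the order of magnitude, matters here. A secondary, more bookkeeping obstacle is verifying that the core-closure never eats into $\{b_1,b_2,b_3,b_4\}$, which requires checking that the four ``outer'' edges $\{b_i,b_{i+1},p_i\}$ cannot all be destroyed — this follows since destroying one of them requires first deleting $p_i$, which can only happen after the \emph{other} edge through $p_i$ is gone, and chasing this back shows the $b_i$ survive.
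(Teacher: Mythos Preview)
Your auxiliary graph is built on the wrong pair of vertices, and this is fatal. You take the edges of $G$ to be the \emph{bases} of the cherries (the shared pair), whereas the paper connects the two \emph{tips}: if $\{a,b,c\}$ and $\{a,b,d\}$ form a cherry, the edge of the auxiliary graph goes between $c$ and $d$. With your definition, a $C_4$ on bases $b_1b_2b_3b_4$ produces eight hyperedges
\[
\{b_i,b_{i+1},p_i\},\quad \{b_i,b_{i+1},q_i\}\qquad (i=1,2,3,4,\ \text{indices mod }4),
\]
in which each $b_i$ has degree $4$ but each of the (generically eight, not four) tip vertices $p_i,q_i$ has degree exactly~$1$. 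When you run your core-closure, every tip is stripped immediately, every hyperedge disappears with it, and the process terminates at the empty graph on $\{b_1,b_2,b_3,b_4\}$. Your check that ``the $b_i$ are never removed'' fails because there is no ``other edge through $p_i$'' to protect it: the tip starts at degree~$1$. The same problem invalidates your opening claim that two cherries sharing the same base give a core on $\le 6$ vertices; the tips again have degree~$1$ and are stripped.

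The paper's choice works precisely because a $C_4$ on \emph{tips} $c_1c_2c_3c_4$ forces each $c_i$ to be a tip of two different cherries, so $\deg(c_i)=2$ automatically; and the two base vertices of each cherry already have degree $2$ within that cherry. Hence all at most $4+2\cdot 4=12$ vertices have degree $\ge 2$ with no stripping needed. What genuinely yields a core on $\le 6$ vertices is two cherries sharing the same \emph{pair of tips}, which in the paper's auxiliary graph is a double edge; this is how the paper disposes of multiplicity before invoking the $C_4$-free bound. Your cleanup step (reducing to one cherry per base) is therefore also aimed at the wrong degeneracy and would not in any case preserve enough cherries, since a single pair of codegree $d$ contributes $\binom{d}{2}$ cherries but only one base.
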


\begin{proof}
Let us define an auxiliary graph $G_n$ where two vertices are connected by an edge iff they are the single vertices of two edges intersecting in two vertices in $H_n^3.$ This is a simple graph since a double edge here would determine a core on at most 6 vertices in $H_n^3.$ If graph $G_n$ contains a $C_4$ then the four defining edge-pairs in $H_n^3$ would form a core on at most 12 vertices. $C_4$-free graphs contain at most $\sim n^{3/2}/2$ edges. (see e.g. in F\"uredi \cite{Furedi})
\end{proof}

This shows that either there is a core on at most 12 vertices, or we can remove $O(n^{3/2})$ edges from $H_n^3$ (one from each intersecting edge-pair) to make it linear. We will assume
that $H_n^3$ is a tripartite and linear hypergraph.

\medskip
We can't prove $\core(12)=o(n^2)$ in general, however we can prove it for an important family of linear triple systems. The first open case of Conjecture \ref{BES} is $\ell=4.$ The conjecture has been proved for triple systems where additional arithmetic structures can be find, like in Szemer\'edi's theorem on $k$-term arithmetic progressions and in its generalizations. For such triple systems a general theorem was proved by the second author in \cite{SS}. For every $c>0$ there is a threshold $m_0$ such that if $(\Gamma_m,{\cdot})$ is a group with $m\geq m_0$ elements, then no matter how we choose $cm^2$ triples of the form $(a,b,a{\cdot} b),$ where $a,b\in \Gamma_m,$ there will be 7 elements of the group carrying at least four of the selected triples. If $S\subset \Gamma_m\times \Gamma_m$ and the density of $S$ is denoted by $\kappa,$ (i.e. $\kappa=|S|/m^2$) then the following holds.

\begin{thm}[Theorem 6 in \cite{SS}] There is a constant $\mu > 0$ depending on $\kappa$  only so that the number of $(\alpha,\beta),$ $(\alpha,\gamma),$ $(\delta,\gamma),$ $(\delta,\beta)$ quadruples from $S$ such that $\alpha\beta=\delta\gamma$ 
 in the group is at least 
$$\mu  e^{\nu\sqrt{\log{m}}}m^2.$$
\end{thm}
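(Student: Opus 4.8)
I want to count the "rectangles" in $S \subseteq \Gamma_m \times \Gamma_m$, i.e. quadruples $(\alpha,\beta),(\alpha,\gamma),(\delta,\gamma),(\delta,\beta)$ all lying in $S$ with the group relation $\alpha\beta = \delta\gamma$. The natural framework is Cauchy–Schwarz applied after fibering $S$ through the product map. Concretely, for each group element $g \in \Gamma_m$ let
\[
 r(g) = \#\{(\alpha,\beta) \in S : \alpha\beta = g\}.
\]
Then $\sum_{g} r(g) = |S| = \kappa m^2$, and the number of pairs of elements of $S$ whose products agree is exactly $\sum_g \binom{r(g)}{2}$, which by convexity is at least $\binom{m}{...}$-type bound: since $r$ is supported on at most $m$ values,
\[
 \sum_g \binom{r(g)}{2} \;\geq\; m \binom{\kappa m^2 / m}{2} \;=\; m\binom{\kappa m}{2} \;\sim\; \tfrac{1}{2}\kappa^2 m^3.
\]
So there are $\Omega(m^3)$ such "collisions." The issue is that a collision $(\alpha,\beta),(\delta,\gamma)$ with $\alpha\beta=\delta\gamma$ need not be a genuine rectangle — we also need $(\alpha,\gamma) \in S$ and $(\delta,\beta) \in S$. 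A trivially paired-up collision where $\alpha=\delta$ (equivalently $\beta=\gamma$) is degenerate; those number only $\sum_g$ (off-diagonal pairs with a repeated first coordinate) $\le |S| \cdot m \le \kappa m^3$, comparable to the main term, so one must be careful, but a cleaner route is to work directly with the four-variable count.

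**Main step.** The honest count of rectangles equals
\[
 R \;=\; \sum_{\alpha,\beta,\gamma,\delta} \mathbf{1}[(\alpha,\beta)\in S]\,\mathbf{1}[(\alpha,\gamma)\in S]\,\mathbf{1}[(\delta,\gamma)\in S]\,\mathbf{1}[(\delta,\beta)\in S]\,\mathbf{1}[\alpha\beta=\delta\gamma].
\]
The plan is to split $R$ according to whether the relation $\alpha\beta=\delta\gamma$ is "forced" (i.e. $\alpha=\delta$) or genuinely constraining, and to show the genuinely constraining rectangles already number $\Omega(m^2)$. After the trivial ones are removed, counting rectangles in $S$ ignoring the product relation is a Kővári–Sós–Turán / bipartite $C_4$-count: the number of $(\alpha,\delta,\beta,\gamma)$ with all four pairs in $S$ is, by Cauchy–Schwarz on the rows of the indicator matrix of $S$, at least $\kappa^4 m^4 - O(m^3)$. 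Now among these $\sim \kappa^4 m^4$ abstract rectangles, each comes with four group elements $\alpha\beta, \alpha\gamma, \delta\beta, \delta\gamma$ satisfying the identity $(\alpha\beta)(\alpha\gamma)^{-1}(\delta\gamma)(\delta\beta)^{-1} = \alpha\beta\gamma^{-1}\alpha^{-1}\cdot\delta\gamma\beta^{-1}\delta^{-1}$, which collapses to a commutator-type relation and equals the identity precisely when $\alpha\beta=\delta\gamma$; so $\mathbf{1}[\alpha\beta=\delta\gamma]$ is the probability-$1/m$ event under a suitable pseudorandomness of $S$, giving $R \gtrsim \kappa^4 m^4 / m = \kappa^4 m^3$ in the expander regime — far more than the claimed $\mu e^{\nu\sqrt{\log m}} m^2$.

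**Where it really lies.** The catch is that $S$ need \emph{not} be pseudorandom: for a general group $\Gamma_m$ the structured extremal $S$ (an interval-times-interval in an abelian $\Gamma_m$, say) can have far fewer product-collisions than $\kappa^2 m^3$, and this is exactly where the arithmetic content enters. The real proof must be the one from \cite{SS}, which invokes a quantitative Balog–Szemerédi–Gowers / Freiman-type statement or Gowers' regularity-style counting for the group, together with a Behrend-type construction controlling sharpness — that is the source of the unusual $e^{\nu\sqrt{\log m}}$ factor, which is the signature of the Behrend bound in the corners/Roth problem rather than a polynomial power. So the hard part will be establishing that even when the abstract $C_4$-count degenerates, the product-relation still holds on a $e^{\nu\sqrt{\log m}}m^2$-sized subfamily; I would obtain this by applying the density-increment or regularity machinery to the triple system $\{(a,b,a{\cdot}b)\}$ directly, reducing the general group case to the abelian case via a standard Cayley-graph / quotient argument, and then citing the Behrend-type lower bound for the number of combinatorial rectangles in sum-sets. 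In short: the Cauchy–Schwarz gives the main term cheaply when $S$ spreads out, but the genuine theorem requires the $(\ell+3,\ell)$-type regularity input to handle the structured case, and that is the obstacle.
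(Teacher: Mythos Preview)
This theorem is quoted from \cite{SS} and is not proved in the present paper, so there is no in-paper argument to compare against. That said, what you have written is not a proof either, and you say so yourself: your final paragraph identifies the obstacle (a structured $S$ defeats the pseudorandomness heuristic) and then gestures at ``density-increment or regularity machinery'' and a ``Behrend-type lower bound'' without carrying out a single step of either. A proof proposal that ends with ``that is the obstacle'' has not cleared the obstacle.

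There are also concrete errors in the heuristics you do offer. Your commutator identity is wrong: in any abelian group the product $(\alpha\beta)(\alpha\gamma)^{-1}(\delta\gamma)(\delta\beta)^{-1}$ is \emph{always} the identity, so it cannot possibly detect the condition $\alpha\beta=\delta\gamma$, and your ``probability-$1/m$ event'' claim has no content. Your Cauchy--Schwarz bound $\sum_g \binom{r(g)}{2} \gtrsim \kappa^2 m^3$ is correct, but it counts ordered pairs $\bigl((\alpha,\beta),(\delta,\gamma)\bigr)\in S^2$ with $\alpha\beta=\delta\gamma$; the theorem requires in addition that $(\alpha,\gamma)$ and $(\delta,\beta)$ lie in $S$, and forcing those two extra memberships is the entire content of the result --- you supply no mechanism for it. Finally, Behrend's construction produces dense sets with \emph{few} configurations; it furnishes upper bounds on what can be forced, not lower bounds on configuration counts, so ``citing the Behrend-type lower bound'' is backwards. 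The shape $e^{\nu\sqrt{\log m}}$ may well remind you of Behrend, but recognising the silhouette of a bound is not the same as proving it, and there is no ``standard Cayley-graph / quotient argument'' that reduces the general group case to the abelian one here.
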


It gives a lower bound on the number of  $(7,4)$ configurations; every triple $(a,b,ab)$ is uniquely determined by $(a,b)\in   \Gamma_m\times \Gamma_m.$ The triples $(a,b,ab), (a,c,ac),(d,c,dc),$ and $(d,b,db)$ determine at most seven elements of $\Gamma_m$ which are $a,b,c,d, ab=dc, ac,$ and $db.$ See fig \ref{Group12}.

\begin{figure}[h]
\centering
\includegraphics[width=0.3\linewidth]{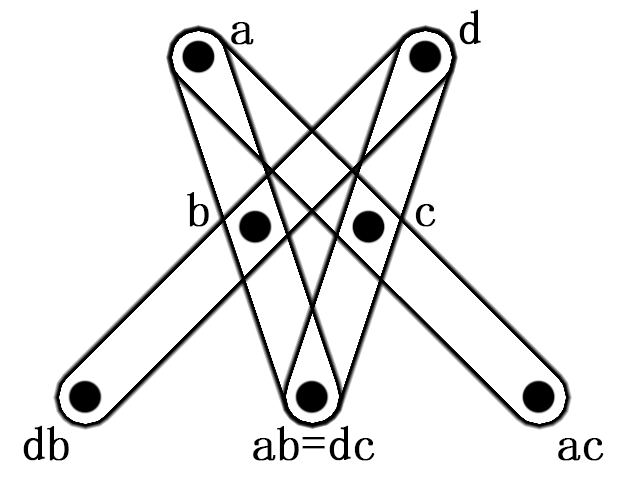}
\caption{}
\label{Group12}
\end{figure}

The last two elements might coincide in which case we are done because it would form a core. We can suppose that the two elements, $ac$ and $db$ are distinct in every $(7,4)$ configurations. If $m$ is large enough then 

$$\mu  e^{\nu\sqrt{\log{m}}}m^2\gg m^2$$
which guarantees that there will be at least three $(7,4)$ configurations $(a_i,b_i,a_ib_i), $ $(a_i,c_i,ac),$ $(d_i,c_i,d_ic_i),$ $(d_i,b_i,db)$ 
such that $a_ib_i=d_ic_i,$  $3\leq i\leq 1,$ (since they were $(7,4)$ configurations) moreover $a_ic_i=a_jc_j=ac$ and $d_ib_i=d_jb_j=db,$ $3\leq i,j\leq 1,$ because there are less then $m^2$ distinct $ac, db$ pairs.
We show that the union of three such $(7,4)$ configurations contains a core on 12 or less vertices. If two out of the three are edge-disjoint then it is a core because they share the only two degree one vertices of the configuration. The two common elements $ac$ and $db$ with the edge  $(a,b,ab)$ determines the $(7,4)$ configuration; $b$ and $db$ gives $d,$ i.e. $db\cdot b^{-1}=d.$ Note that $ab=dc,$ so $d$ and $ab$ gives $c,$ $d^{-1}\cdot(dc)=c.$ Finally we get $a$ by $(ac)\cdot c^{-1}=a.$ So, the $(7,4)$ configurations cannot share such edge and the same holds to edge $(d,c,dc),$ by symmetry. The only remaining case (up to symmetry) is when the three configurations share the $(a,c,ac)$ edge and the other edges are distinct. The union of them has degree one in $ac,$ degree three in $a$ and $c$ and degree at least two everywhere else. If we remove vertex $ac$ and the common edge  $(a,c,ac)$ then the remaining triples determine a core on at most 12 vertices (some vertices might coincide).

\subsection{$k=13$}

\medskip
A core on 13 vertices has at least 5 vertices in one vertex class therefore it has at least 10 edges. It is a $(13,10)$ configuration, so a proof of $\core{(n,13)}=o(n^2)$ would imply Conjecture \ref{BES} for $\ell=10.$ We can only prove the existence of a $(14,10)$ configuration.

\subsection{$k=14$}
\bigskip
A core on 14 vertices contains at least 10 edges (since $\lceil 14\cdot 2 /3 \rceil = 10$).
We were unable to prove that $\core(n,14)$ is $o(n^2)$, however we improve the
previously best known bound for the Brown Erd\H os S\'os conjecture by Selkow and S\'ark\"ozy.
We prove that if $H_n^3$ contains no 14 vertices spanning at least 10 edges then
$e(H_n^3) = o(n^2)$. (The bound of Selkow and S\'ark\"ozy, Theorem \ref{SS}, guarantees 9 edges only on 14 vertices.) 

\begin{thm}\label{core14}
If $H_n^3$ contains no subgraph $F_{14}^3$ such that $e(F_{14}^3) = 10$, then $e(H_n^3) = o(n^2)$.
\end{thm}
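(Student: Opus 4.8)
The plan is to mimic the Sárközy–Selkow strategy but to exploit a hypergraph regularity lemma (Frankl–Rödl, Theorem cited as \cite{FR}) to gain the extra edge, going from the $9$ edges their argument guarantees on $14$ vertices up to $10$. First I would recall Theorem \ref{63density}: from $e(H_n^3)\ge cn^2$ we already get $\delta n^3$ copies of $F_6^3$ with $3$ edges. Each such $F_6^3$ is two intersecting "triple-stars" or, more usefully, it contains a path of length $2$ (two edges sharing a vertex) plus a third edge; the key point is that a positive density of the vertex set participates in many such configurations. The idea is to take one dense linear-algebraic-free subhypergraph and iteratively merge $F_6^3$'s.

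Concretely, I would argue as follows. Suppose for contradiction $e(H_n^3)\ge cn^2$ and there is no $F_{14}^3$ with $10$ edges. By the hypergraph regularity lemma, pass to a regular-dense "reduced" structure; in a suitably regular triad, one finds many copies of a $6$-vertex $3$-edge configuration $F$, and moreover one can find two such copies sharing a controlled number of vertices and edges. The arithmetic: two edge-disjoint copies of $F_6^3$ sharing $s$ vertices live on $12-s$ vertices and carry $6$ edges; three edge-disjoint copies pairwise sharing two vertices would already produce something too large, so I would instead combine an $F_6^3$ (6 vertices, 3 edges) with a further $F_8^3$-type extension. The Sárközy–Selkow induction builds an $(\ell+2+\lfloor\log_2\ell\rfloor, \ell)$ configuration by repeatedly doubling: from an $(a,b)$-configuration and a copy of it sharing one "interface" one gets roughly a $(2a-1, 2b)$-configuration. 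Starting from $F_6^3$ ($6$ vertices, $3$ edges), one doubling gives $(11,6)$ [sharing one vertex: $6+6-1=11$], a second gives $(21,12)$ — too big. To land on $(14,10)$ I would instead do one doubling to $(11,6)$ and then glue in a further path of two edges ($3$ new vertices, $2$ new edges) twice, or, cleaner, take a $(9,5)$-type subconfiguration (five edges on $9$ vertices, which a dense hypergraph contains via the $\ell=5$ or $\ell=6$ partial results plus regularity) and adjoin two more edges through the existing vertices. The hypergraph regularity lemma is what lets me force the newly added edges to reuse old vertices rather than spreading out.

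The honest shape of the argument, and the reason regularity (not just counting) is needed: after reducing to a dense regular triad, the $F_6^3$'s are distributed so uniformly that one can find a copy $F$ and then two further edges $e_1,e_2$ each meeting $F$ in two vertices, with $e_1,e_2$ together introducing at most $8$ new vertices is too many — so instead I force $e_1$ to meet $F$ in two vertices (adding $1$ new vertex, so $7$ vertices, $4$ edges), then force $e_2$ to meet the resulting $7$-vertex, $4$-edge configuration in two vertices (adding $1$, so $8$ vertices, $5$ edges), and iterate: $9$ vertices/$6$ edges, $10/7$, $11/8$, $12/9$, $13/10$. That lands at $(13,10)$, which is even stronger than claimed; the point of stating $(14,10)$ is to leave slack for vertices that fail to coincide. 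Each "force an edge to meet the current configuration in two prescribed vertices" step is exactly a co-degree statement that the regularity lemma delivers on a density-$\Omega(1)$ pair, provided $n$ is large — this is where the $o(n^2)$ versus $\Omega(n^2)$ dichotomy enters, since below $\Omega(n^2)$ edges the relevant pair-degrees are too small.

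The main obstacle, and where I expect the real work to be, is the bookkeeping in the regularity step: one must choose the reduced hypergraph and the triad so that at \emph{every} stage of the $6$-step extension the "current configuration so far" still sits inside a dense regular environment, i.e. the already-chosen vertices must be \emph{typical} with respect to the co-degree conditions needed for the next edge. This requires deleting atypical vertices at each stage and checking that a positive proportion survives all deletions — a standard but delicate "dependent random choice / iterated cleaning" argument inside the Frankl–Rödl framework. Getting the regularity parameters to nest correctly (the $\varepsilon$ for stage $i+1$ must be chosen before the $\delta$ for stage $i$) is the crux; the combinatorial arithmetic $13 \le 14$ and $10 = 10$ is the easy part.
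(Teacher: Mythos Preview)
Your proposal has a genuine gap. The iterative extension you sketch --- start from an $F_6^3$ and seven times ``force an edge meeting the current configuration in two prescribed vertices'' to climb from $(6,3)$ to $(13,10)$ --- cannot be carried out. In a $3$-uniform hypergraph with $cn^2$ edges the \emph{average} codegree of a pair is $\Theta(1)$, and after linearising (which the paper does, and which you implicitly need to control overlaps) every pair has codegree at most $1$. Regularity gives you uniform control over edge densities between large vertex sets; it does not let you pick two already-fixed vertices and demand an edge through them, still less do this seven times in succession with the new edge always landing outside the current configuration. The ``dependent random choice / iterated cleaning'' language you invoke applies when the codegrees are growing functions of $n$, which they are not here. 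Note also that your endpoint $(13,10)$ would prove the $\ell=10$ case of the Brown--Erd\H os--S\'os conjecture outright, which is open; that is a sign the argument is too strong to be correct.

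The paper's route is structurally different from anything in your proposal. After making $H_n^3$ linear and $6$-partite, it takes the $\delta n^3$ copies of $F_6^3$ from Theorem~\ref{63density} and observes that each such copy is determined (up to multiplicity two) by any three of four distinguished vertices, one in each of four fixed parts $V_2,V_3,V_1,V_5$. It then builds an \emph{auxiliary} $4$-partite $3$-uniform hypergraph $\mathcal H$ on $V_2\cup V_3\cup V_1\cup V_5$ in which each $F_6^3$ contributes a $K_4^3$; near-unique decodability makes these cliques essentially edge-disjoint. The Frankl--R\"odl tool actually used is the $K_4^3$ removal lemma (Theorem~\ref{rem}), not regularity per se: it upgrades $\delta' n^3$ edge-disjoint $K_4^3$'s to $\delta'' n^4$ copies of $K_4^3$ in $\mathcal H$. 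A generic such clique has its four faces coming from four \emph{different} $F_6^3$ configurations $F_1,F_2,F_3,F_4$, and a careful case analysis of shared edges and vertices shows $F_1\cup F_2\cup F_3\cup F_4$ has at most $14$ vertices and at least $10$ edges. The key idea you are missing is this encoding of $(6,3)$ configurations as cliques in an auxiliary hypergraph so that the removal lemma manufactures the overlap pattern for you, rather than trying to grow a single configuration edge by edge.
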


Our main tool in proving this theorem is a generalization of Theorem~\ref{63density} by
Frankl and R\"odl \cite{FR}. 

\begin{thm}[Removal Lemma for 3-uniform Hypergraphs]\label{rem}
For every $c>0$ there exists a $c'>0$ such that if $H_n^3$ contains at least
$cn^3$ pairwise edge-disjoint cliques $K_4^3$, then it contains $c'n^4$ cliques $K_4^3$.
\end{thm}

\begin{figure}[h]
\centering
\includegraphics[width=0.6\linewidth]{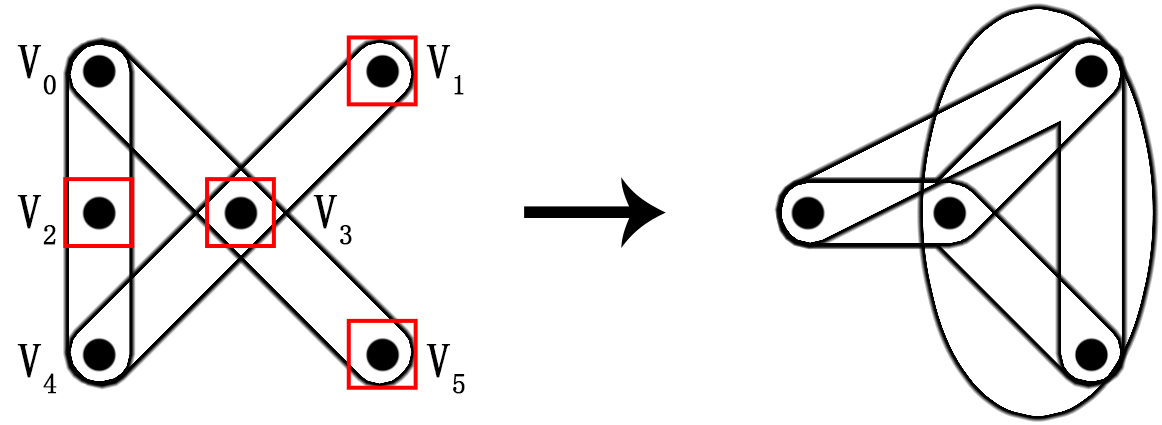}
\caption{}
\label{clique}
\end{figure}

\begin{proof}[Proof of Theorem~\ref{core14}]
Suppose the hypergraph $H_n^3$ satisfies $e(H_n^3) = cn^2$ for some $c > 0$.
We may assume that $H_n^3$ is linear and tripartite. We can make it tripartite by a random tri-partitioning as before and 
we can suppose that no vertex-pair is incident to 10 or more edges since it would give a $(12,10)$ configuration which is even 
stronger than what we want. Then the procedure which gives the linear hypergraph is the following: select an edge and remove the edges sharing two vertices with it and continue. In this way
we still have at least one tenth of the original edges and the remaining ones form a linear hypergraph.

By Theorem~\ref{63density} we know that there exist $\delta n^3$
subgraphs $F_6^3$ such that $e(F_6^3) = 3$ ($(6,3)$ configurations). In a tripartite linear hypergraph any such configuration looks like the graph on the left side of Figure~\ref{clique}. A degree one and a degree two vertex in each vertex class. To simplify our arguments below, let us partition all three vertex classes into two further classes. Every vertex selects its subclass independently at random with probability 1/2. The vertex classes now are $V_0, \ldots, V_6.$ At least $\delta' n^3$ $(6,3)$ configurations are distributed into the vertex classes in the same way as in Figure~\ref{clique} (with $\delta'\sim (1/2)^6\delta.$). Let us denote this family of $(6,3)$ configurations by ${\mathcal{F}}_n$.

There are triples of vertices of every $F_6^3$ that determine the configuration (almost) uniquely, no more than two $F_6^3$ configurations contain the same triple. 
This step of the proof is similar to the argument in Lemma \ref{dense74}. We will verify that any three vertices of an $F_6^3$ from vertex classes 
$V_2, V_3, V_1, V_5$ determines the configuration almost uniquely in ${\mathcal{F}}_n$. We analyze all four cases when $F\in{\mathcal{F}}_n$ has vertices 
\begin{enumerate}\label{cases}
\item $v_2\in V_2, v_3\in V_3$, and $v_1\in V_1.$ Form $v_1,v_3$ we recover $v_4$ as it is the unique third vertex of an edge of $F,$ Then $v_4,v_2$ gives $v_0$ and finally $v_0,v_3$ determines $v_5.$ The triple determines the configuration uniquely.
\item $v_2\in V_2, v_3\in V_3$  and $v_5\in V_5.$ This case is the same as the previous, we just flip the picture changing $v_1$ with $v_5.$ The triple determines the configuration uniquely.
\item $v_3\in V_3, v_1\in V_1$  and $v_5\in V_5.$ It is similar to the previous cases. From $v_1,v_3$ we recover $v_4.$ Using $v_5,v_3$ we get $v_0$ and from $v_0, v_4$ we get $v_2.$ The triple determines the configuration uniquely.
\item $v_2\in V_2, v_1\in V_1$  and $v_5\in V_5.$ These are the vertices which have degree one in the configuration. If $F_1,F_2\in{\mathcal{F}}_n$ share the three vertices then their union gives a core, a $(9,6)$ configuration. This would be a great news if we were looking for cores here, however our goal is different now. We can not exclude  this case. However if we had three, if $F_1,F_2, F_3\in{\mathcal{F}}_n$ sharing the three vertices then their union gives a $(12,9)$ system. Adding any edge incident to any vertex of $F_1\cup F_2\cup F_3$ in $H_n^3$ gives a $(14,10)$ configuration. So, we can suppose that the triple determines the configuration almost uniquely, with maximum multiplicity of two.
\end{enumerate}

We will construct an auxiliary four-partite three-uniform hypergraph 

$${\mathcal{H}}={\mathcal{H}}(V_2, V_3, V_1, V_5) $$ 

on vertex sets $V_2, V_3, V_1, V_5.$ 
For every configuration $F\in{\mathcal{F}}_n$ we add a clique, $K_4^3,$ to $\mathcal{H}$ on the vertices of $F$ in $V_2, V_3, V_1, V_5.$ 
The number of vertices of ${\mathcal{H}}$ is $|V_2|+|V_3|+|V_1|+|V_5|=O(n)$ and it is the union of $\delta'n^3$ edge disjoint cliques. 
No we can apply the Hypergraph Removal Lemma, Theorem \ref{rem}. There is a constant $\delta''>0$ which depends on $\delta$ only such that ${\mathcal{H}}$ contains at least $\delta'' n^4$ $K_4^3$ cliques. If $n$ is large enough then there is a clique such that the edges correspond to four different configurations $F_1,F_2,F_3,F_4\in{\mathcal{F}}_n,$ where $F_i$ determined by the vertices in case (i) above. For example if the clique has vertices $v_2, v_3, v_1, v_5$ then $F_3$ contains $v_3, v_1$ and $v_5$ but not $v_2.$ 

We claim that in this case $F_1\cup F_2\cup F_3\cup F_4$ has at most 14 vertices and at least 10 edges in in $H_n^3.$ We have the 4 vertices of the clique and every configuration has 3 vertices outside. $F_1$ and $F_3$ contain both $v_1$ and $v_3$ so they share a common vertex outside of the clique in $V_4.$ Similarly $F_2$ and $F_3$ contain both $v_3$ and $v_5$ so they have a common vertex in $V_0.$ The number of vertices then is at most $4+4\cdot3-2=14.$ The number of edges is $4\cdot3=12$ with multiplicity. From the previous argument we see that $F_1,F_3$ and $F_2,F_3$ both share an edge. These are the only cases when $F_i$ and $F_j$ share an edge when $i\neq j$. For any $i<j,$ $F_i$ and $F_j$ have exactly two common vertices in the clique. If they share an edge (other than what we considered already) then they share an edge which has at least two vertices outside of the clique. It can not be an edge incident to a vertex in $V_1.$ Two edges share a clique vertex there, one has multiplicity two and it has two vertices in the core so no more edge can overlap. The two remaining edges can not be the same since one contains a clique vertex in $V_1$ and the other is not.  The same holds for $V_5.$  Out of the four edges between $V_0$ and $V_4$ the edge of $F_3$ avoids the clique in $V_2,$ (otherwise it would contain all four vertices of the clique) and the other three share a clique vertex in $V_2,$ so $F_3$ can not share edge between $V_0$ and $V_4$ with any other configuration. The other three can not share an edge either since any pair of them has a common vertex in in the core outside of $V_2$ and any $F\in{\mathcal{F}}_n$ can be reconstructed uniquely from its edge in $V_0,V_2,V_4$ and one more vertex.

\begin{figure}[h!]
\centering
\includegraphics[width=0.6\linewidth]{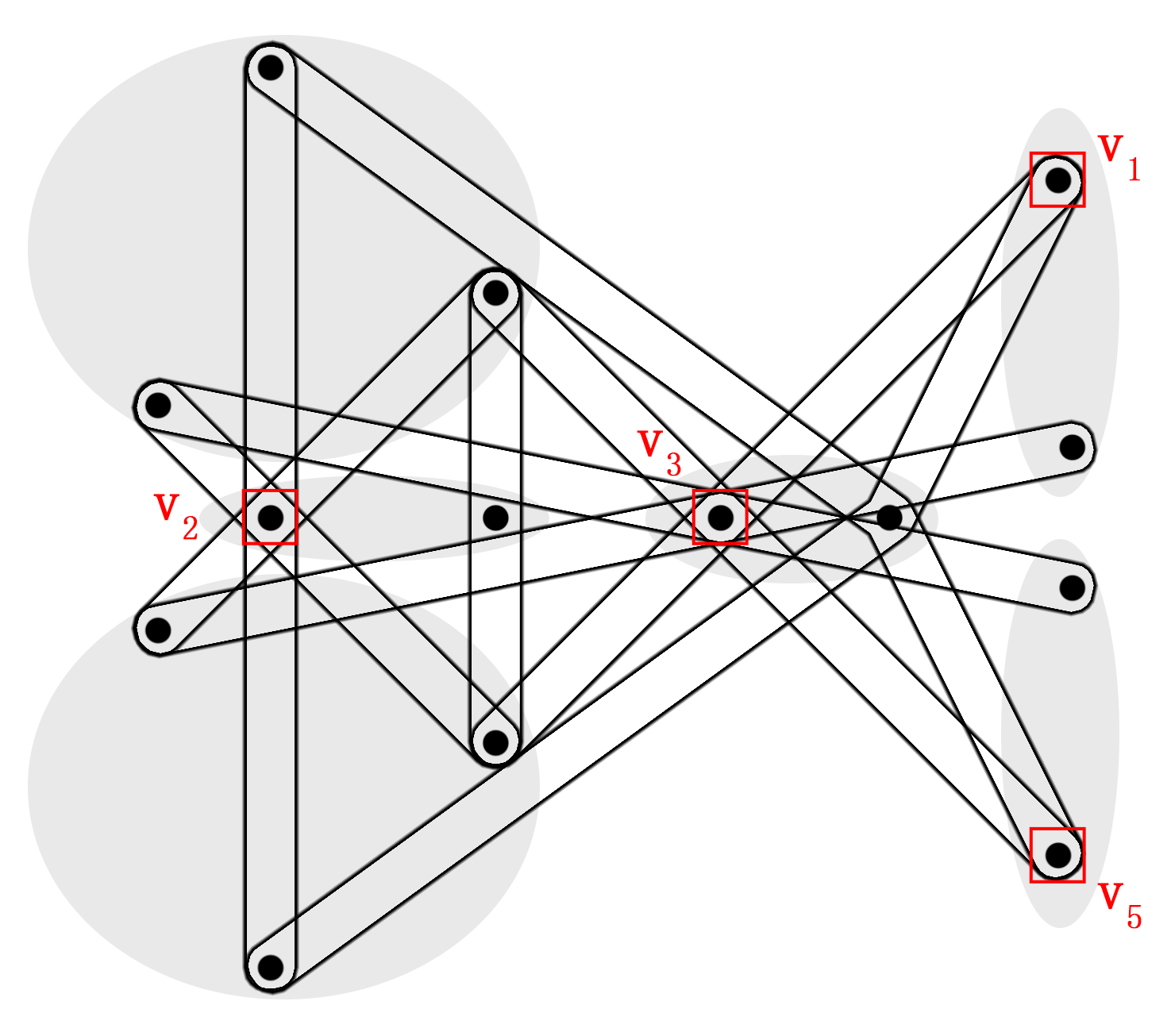}
\caption{}
\label{1410}
\end{figure}

Thus we get a structure of 10 edges on 14 vertices as seen in Figure~\ref{1410}.

\end{proof}

\subsection{$k=15$}

\medskip
The following is the first unconditional result with $o(n^2)$ edges.
\begin{thm}\label{core15}
$$\core(n,15)=o(n^2).$$
\end{thm}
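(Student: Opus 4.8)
The plan is to follow the same template as the proof of Theorem~\ref{core14}, but this time squeeze out a core rather than merely an enlarged Brown--Erd\H os--S\'os configuration. As before, assume $e(H_n^3) = cn^2$, make $H_n^3$ tripartite and linear (losing only constant factors), and assume no vertex-pair meets $10$ or more edges, since such a pair already yields a $(12,10)$ configuration and hence a small core. Apply Theorem~\ref{63density} to obtain $\delta n^3$ copies of $F_6^3$ with $3$ edges each; in a linear tripartite hypergraph every such copy is the ``bowtie'' picture of Figure~\ref{clique}, with a degree-one and a degree-two vertex in each class. Split each of the three vertex classes in two at random, so that the copies in our family ${\mathcal F}_n$ all look identical across the $7$ subclasses $V_0,\ldots,V_6$, at the cost of shrinking $\delta$ by a factor $2^{-6}$.

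Next, as in Theorem~\ref{core14}, identify four vertices of each $F_6^3$ (one in each of $V_2,V_3,V_1,V_5$) that almost determine the configuration: three of the four choices of a triple reconstruct the copy uniquely, and the fourth (the three degree-one vertices $v_2\in V_2, v_1\in V_1, v_5\in V_5$) has multiplicity at most two, because three copies sharing that triple already form a $(9,6)$ core directly. Build the auxiliary $4$-partite $3$-uniform hypergraph $\mathcal H$ on $V_2\cup V_3\cup V_1\cup V_5$ by placing a $K_4^3$ on the four distinguished vertices of each $F\in{\mathcal F}_n$; this is a union of $\Omega(n^3)$ edge-disjoint $K_4^3$'s. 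Apply the Removal Lemma (Theorem~\ref{rem}) to get $\Omega(n^4)$ copies of $K_4^3$ in $\mathcal H$, hence for large $n$ a copy whose four edges come from four \emph{distinct} configurations $F_1,\dots,F_4\in{\mathcal F}_n$, with $F_i$ attached to the triple of type $(i)$.

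The heart of the argument is the combinatorial bookkeeping showing that $F_1\cup F_2\cup F_3\cup F_4$ is already a core (not just a $(14,10)$ configuration). The union lives on at most $4 + 4\cdot 3 - 2 = 14$ vertices, since $F_1,F_3$ share an outside vertex in $V_4$ and $F_2,F_3$ share an outside vertex in $V_0$, exactly as in Theorem~\ref{core14}. In that union the four clique vertices $v_2,v_3,v_1,v_5$ each lie in three of the four configurations, hence have degree at least three; the two shared outside vertices (in $V_4$ and $V_0$) have degree at least two; and every remaining vertex is a degree-two vertex of its own configuration. The one genuine worry is a vertex that is the degree-\emph{one} vertex of its $F_i$ and does not coincide with anything else in the union --- but the distinguished degree-one vertices are precisely $v_2,v_1,v_5$, which are the clique vertices and already have high degree, while the degree-one vertices in $V_0,V_4$ of the various $F_i$ must be checked to either coincide (as forced by the shared-vertex structure) or else, if they create a pendant edge, to be absorbable: adding any $H_n^3$-edge through such a vertex, exactly as in case (4) of Theorem~\ref{core14}, keeps us within $15$ vertices and produces degree $\ge 2$ everywhere. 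This last absorption is where the bound jumps from $14$ to $15$.

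The main obstacle I expect is verifying that no two of the $F_i$ share an unwanted edge and that the potential pendant vertices behave as claimed; this is the same case analysis run at the end of the proof of Theorem~\ref{core14} (edges incident to $V_1$ or $V_5$ cannot be shared because they pin a clique vertex of multiplicity $\le 2$ that already has two core-neighbours; edges between $V_0$ and $V_4$ cannot be shared because $F_3$'s such edge avoids the $V_2$-clique vertex while the other three contain it, and those three are distinguished by their $V_0,V_2,V_4$ edge plus one further vertex). Granting that bookkeeping, $F_1\cup F_2\cup F_3\cup F_4$, after possibly adjoining one extra $H_n^3$-edge to kill a lone degree-one vertex, is a core on at most $15$ vertices, so the existence of $cn^2$ edges is contradicted; that is, $\core(n,15) = o(n^2)$.
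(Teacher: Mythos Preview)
Your proposal has a genuine gap in the ``core'' bookkeeping. In the $(14,10)$ configuration $F_1\cup F_2\cup F_3\cup F_4$ there are \emph{three} degree-one vertices, not one. Recall that in each $F\in\mathcal F_n$ the degree-one vertices lie in $V_1,V_2,V_5$ and the degree-two vertices in $V_0,V_3,V_4$. Now $F_1$ misses the clique vertex $v_5$, so its own $V_5$-vertex is a non-clique vertex of degree one in the union; likewise $F_2$ contributes a non-clique degree-one vertex in $V_1$, and $F_3$ one in $V_2$. (Only $F_4$ has all three of its degree-one vertices among the clique vertices.) None of these three can be forced to coincide with anything else: they sit in $V_1,V_2,V_5$, whereas all other non-clique vertices of the union lie in $V_0,V_3,V_4$. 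Your ``absorption'' step---adding a single $H_n^3$-edge through one pendant vertex---therefore cannot work: it repairs one degree-one vertex, leaves the other two untouched, and typically introduces two \emph{new} degree-one vertices. Your claim that the only worrisome pendant vertices live in $V_0,V_4$ has the parity of the subclasses backwards.

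The paper's proof of Theorem~\ref{core15} avoids the Removal Lemma altogether and is much shorter. From Theorem~\ref{63density} one has $\delta n^3$ copies of the $(6,3)$ configuration, each with three degree-one vertices. A pigeonhole on pairs of degree-one vertices yields $\Omega(n^4)$ ordered pairs of $(6,3)$ configurations overlapping in two degree-one vertices; each such pair gives a $10$-vertex, $6$-edge subgraph with exactly two degree-one vertices. Deleting one edge (the one through a remaining degree-one vertex) leaves a $9$-vertex, $5$-edge subgraph with exactly three degree-one vertices. Since there are $\Omega(n^4)$ of these and only $O(n^3)$ possible triples of degree-one vertices, two of them share all three degree-one vertices; their union is a core on at most $9+9-3=15$ vertices. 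So the argument is two elementary pigeonhole steps on top of Ruzsa--Szemer\'edi, with no need for the Frankl--R\"odl machinery.
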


\begin{proof}
We use the subgraphs of 6 vertices with 3 edges which are abundant by
Theorem~\ref{63density}. We will have $c'n^4$ pairs of these subgraphs
which overlap on two degree one vertices. These will be
subgraphs on 10 vertices with 6 edges, with two degree one vertices. We remove
one edge from each to create $c'n^4$ subgraphs on 9 vertices with 5 edges, with
three degree one vertices as seen
in Figure~\ref{two63}. Two of these will overlap on these degree one vertices,
giving us a core on 15 vertices.
\end{proof}

\begin{figure}[h!]
\centering
\includegraphics[width=0.25\linewidth]{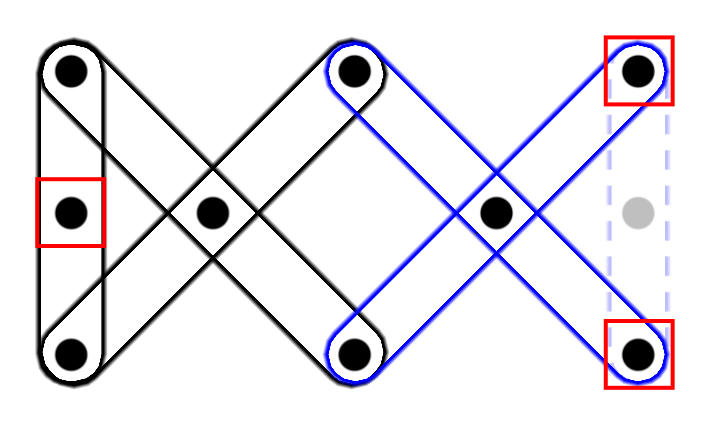}
\caption{}
\label{two63}
\end{figure}

\section{Large $k$ (16 to $\sqrt{n}$)}
In this range, when the number of edges of $H_n^3$ is at least $n^{3/2+c}$ for some $c>0,$ we show that one can guarantee a core of constant size, i.e. the size of the smallest core depends on $c$ but not on $n.$

Let us define a graph $G_m$ on $m=n^2$ vertices as follows. The vertices are the (ordered) pairs of the vertices $H_n^3.$ Two vertices, $(v_i,v_j)$ and $(v_s,v_t)$ are connected by an edge in $G_m$ if there is a vertex $v_r$ such that $(v_i,v_r,v_s)$ and $(v_j,v_r,v_t)$ are two distinct edges in $H_n^3.$ Note that the vertices are not necessarily distinct.
Jensen's inequality gives a lower bound on $e(G_m)$ in terms of the average degree in
$H_n^3$ and the number of vertices,
\[ e(G_m)\geq {{\frac{e(H_n^3)}{n}}\choose 2}n. \]
A cycle in $G_m$ is always a core in $H_n^3.$ It is yet another hard problem to determine the maximal girth of a graph in terms of the number of edges. We are going to use the asymptotically best known bounds for girth $g=2s+1$ see e.g.~in \cite{Bo} (page 1264) or in \cite{FS} . If $G_m$ has girth at least $g$ then $m\gtrsim e(G_m)^{s/s+1}.$

\begin{thm} Let $s>2$ be an integer. If $e(H_n^3)\gg n^{3/2+1/s}$ then $H_n^3$ contains a core on at most $3(2s+1)$ vertices.
$$\core(n,3(2s+1))=O(n^{3/2+1/s}).$$
\end{thm}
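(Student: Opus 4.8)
The plan is to work with the auxiliary graph $G_m$ already defined in the text and to exploit the lower bound $e(G_m)\geq \binom{e(H_n^3)/n}{2}n$ together with the classical extremal bound for graphs of large girth. First I would observe that when $e(H_n^3)\gg n^{3/2+1/s}$, the average degree $e(H_n^3)/n$ in $H_n^3$ is $\gg n^{1/2+1/s}$, so $e(G_m)\gg n^{1+2/s}=m^{1/2}\cdot n^{2/s}$, which is comfortably of order $m^{1/2+\varepsilon}$ for a fixed $\varepsilon=\varepsilon(s)>0$. Spelling this out: $e(G_m)\gg n^{1+2/s}$, and since $m=n^2$ this is $m^{(1+2/s)/2}=m^{1/2+1/s}$.

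Next I would invoke the cited extremal result: a graph on $m$ vertices with girth at least $g=2s+1$ has at most $O(m^{1+1/s})$ edges, equivalently if it has $e(G_m)$ edges and girth $\geq 2s+1$ then $m\gtrsim e(G_m)^{s/(s+1)}$. Plugging in $e(G_m)\gg m^{1/2+1/s}$ gives, in the girth $\geq 2s+1$ case, $m\gtrsim m^{(s/(s+1))(1/2+1/s)}=m^{(s+2)/(2(s+1))}$. Since for $s>2$ the exponent $(s+2)/(2(s+1))$ is strictly less than $1$ — indeed $(s+2)/(2(s+1)) = 1/2 + 1/(2(s+1)) < 1$ for all $s\geq 1$, and the gap between $1$ and this exponent is bounded away from $0$ — this inequality fails once $n$, hence $m$, is large enough. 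Therefore $G_m$ must contain a cycle of length at most $2s+1$.

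Finally I would translate that short cycle back into a core in $H_n^3$. As noted in the text, a cycle in $G_m$ always yields a core in $H_n^3$: each vertex $(v_i,v_j)$ of $G_m$ corresponds to a pair of vertices of $H_n^3$, and each edge of $G_m$ between $(v_i,v_j)$ and $(v_s,v_t)$ witnesses an apex $v_r$ together with two edges $\{v_i,v_r,v_s\}$ and $\{v_j,v_r,v_t\}$ of $H_n^3$. A cycle of length $\ell\leq 2s+1$ in $G_m$ thus involves at most $\ell$ apex vertices and the $2\ell$ coordinates of its $\ell$ vertices, so it spans at most $3\ell\leq 3(2s+1)$ vertices of $H_n^3$; along the cycle every coordinate vertex is covered by two consecutive edge-pairs and every apex by its own pair, so after discarding any stray pendant edges the consecutive edges around the cycle give every vertex of the resulting subgraph degree at least two. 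Hence $H_n^3$ contains a core on at most $3(2s+1)$ vertices, which is the claimed bound $\core(n,3(2s+1))=O(n^{3/2+1/s})$.

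The main obstacle I anticipate is purely bookkeeping in the last step: when the cycle in $G_m$ has repeated vertex-coordinates or repeated apices (the text warns ``the vertices are not necessarily distinct''), one must check the vertex count $3(2s+1)$ is still an upper bound (repetitions only help) and, more delicately, that the degree-$\geq 2$ condition genuinely holds — a degenerate cycle could a priori collapse to a sub-configuration with a degree-one vertex, and one needs to argue that in that case an even smaller core is found, or that one can always extract a genuine core from the closed walk. I would handle this by choosing a shortest cycle in $G_m$, which rules out chords and most degeneracies, and then arguing that any remaining coincidence only identifies vertices and thereby merges degrees rather than destroying the core property.
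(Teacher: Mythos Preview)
Your approach is exactly the paper's: use the auxiliary graph $G_m$, the Jensen lower bound on $e(G_m)$, and the Moore-type girth inequality. However, there is an arithmetic slip that, as written, breaks the argument. From $e(H_n^3)/n\gg n^{1/2+1/s}$ and the displayed Jensen bound $e(G_m)\ge n\binom{e(H_n^3)/n}{2}$ you should get $e(G_m)\gg n\cdot n^{1+2/s}=n^{2+2/s}=m^{1+1/s}$; you dropped the factor $n$ and wrote $e(G_m)\gg n^{1+2/s}=m^{1/2+1/s}$. With your weaker bound the girth hypothesis gives $m\gtrsim e(G_m)^{s/(s+1)}\gtrsim m^{(s+2)/(2(s+1))}$, and since the exponent $(s+2)/(2(s+1))<1$ this inequality is \emph{trivially satisfied}, not violated --- so your sentence ``this inequality fails once $n$ is large enough'' is backwards, and no contradiction is obtained.

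With the correct count $e(G_m)\gg m^{1+1/s}$, the girth $\ge 2s+1$ hypothesis yields $m\gtrsim e(G_m)^{s/(s+1)}\gg (m^{1+1/s})^{s/(s+1)}=m$, which is the contradiction you want. After fixing this, the rest of your write-up (including the translation of a short cycle in $G_m$ into a core on at most $3(2s+1)$ vertices, and your remarks on degeneracies) matches the paper's intended argument; the paper itself simply asserts ``a cycle in $G_m$ is always a core in $H_n^3$'' and leaves the bookkeeping you sketched to the reader.
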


The proof follows from the observations above. We expect that much better bounds can be obtained and there is a chance to prove bounds without using the girth inequality in the auxiliary graph.

\section{Very large $k$ ($\sqrt{n}$ to $n$)}
For this range of $k$ we use a stripping algorithm similar to one commonly used to find cores in random constraint
satisfaction problems \cite{Mike}. 

Any hypergraph $H_n^3$ such that $e(H_n^3) \geq n-2$ contains a core which can be found by repeatedly removing
degree one vertices. For our purposes the following randomized step is convenient: Choose $n^{3/2} / \sqrt{e(H_n^3)}$ vertices uniformly at
random, and remove the rest. The probability of one edge being present
is $\big( ( n^{3/2} / \sqrt{e(H_n^3)} )/n \big)^3$, so the expected
number of edges among the chosen vertices is 
$\big( ( n^{3/2} / \sqrt{e(H_n^3)} )/n \big)^3 e(H_n^3)$, which is
equal to the number of vertices. Thus there exists a subgraph on that
many vertices with at least as many edges, so by the stripping process
above the existence of a core is guaranteed.

Thus $\core(n,k) \leq n^3 / k^2$ in this range.

\section{Summary} 
In the previous sections we considered the $\core(n,k)$ function. One can see more details examining the following variation of core. 

\[ \core^*(n,k) = \min \{ t : e(H_n^3) \geq t \Rightarrow \mbox{$H_n^3$ contains a core on $k$ vertices} \}, \]

The difference between the two notations is that here we are looking for the threshold where the existence of a core on exactly $k$ vertices is guaranteed. 

\medskip

\begin{table}[h]
\begin{tabular}{ |l|l|l| } 
\hline
\multicolumn{3}{ |c| }{About the $\core^*(n,k)$ function} \\
\hline
\multicolumn{3}{}{}\\
\hline
\multirow{3}{*}{very small $k$} & $k=4$ & $0.2857{n\choose 3}\leq \core^*(n,4)\leq  0.2871{n\choose 3}$ \\
 & $k=5$ & $\Omega(n^{5/2})\leq \core^*(n,5)= O(n^{5/2})$ \\ 
 & $k=6,7,8$ & $\Omega(n^2)\leq\core^*(n,k)=O(n^2)$ \\
\hline
\multirow{7}{*}{small $k$} 
 & $k=9$ & conj. $\core^*(n,9)=o(n^2)\implies BES(\ell=6)$ \\  
 & $k=10$ & $\core^*(n,10)=\Omega(n^2)$\\  
 & $k=11$ & conj. $\core^*(n,11)=o(n^2)\implies BES(\ell=8)$ \\  
 & $k=12$ & $\core^*(n,12)=o(n^2)$ for some triple systems\\  
 & $k=13$ & conj. $\core^*(n,13)=o(n^2)\implies BES(\ell=10)$ \\ 
 & $k=14$ & conj. $\core^*(n,14)=o(n^2)$ see Theorem \ref{core14} \\ 
 & $k=15$ & $\core^*(n,15)=o(n^2)$ see Theorem \ref{core15} \\
\hline
\end{tabular}
\medskip
\caption{{$BES(\ell=t)$ means that Conjecture \ref{BES} holds for $\ell=t.$} }
\label{table:mind}
\end{table}

\section{Acknowledgements}
The authors are thankful for the helpful discussions with Boris Bukh, Zolt\'an F\"uredi, Jie Han, and Dhruv Mubayi. We also thank the referees for the useful remarks. 


\end{document}